\documentclass{article}  %
 \usepackage{amsmath}    % math enhancements latex2e (replaces amstex)
 \usepackage{amsthm}     % proclaims theoremstyle/proof environments
 \usepackage{amscd}      % commutative Diagram environment
 \usepackage{amssymb}    % AMSFonts and symbols
 \usepackage{eucal}      % Euler Cal/Script Fonts
 \usepackage{latexsym}   % latex symbols (like \pounds)
 \usepackage{graphicx}   % graphic package for postscript figures
 \usepackage{verbatim}   % for comment environment

\newtheorem{theorem}{Theorem}[section]

\newtheorem{lemma}[theorem]{Lemma}
\newtheorem{corollary}[theorem]{Corollary}
\newtheorem{definition}[theorem]{Definition}

%=======================================================================
\begin{document}

\title{{The Vortex-Wave equation with a single vortex as the limit of the Euler equation.}}

\author{Clayton Bjorland\footnote{Department of Mathematics, University of Texas.  1 University Station C1200, Austin, TX, 78712-0257.  bjorland@math.utexas.edu}}

\maketitle
\begin{abstract}
In this article we consider the physical justification of the Vortex-Wave equation introduced by Marchioro and Pulvirenti, \cite{MR1098512}, in the case of a single point vortex moving in an ambient vorticity.  We consider a sequence of solutions for the Euler equation in the plane corresponding to initial data consisting of an ambient vorticity in $L^1\cap L^\infty$ and a sequence of concentrated blobs which approach the Dirac distribution.  We introduce a notion of a weak solution of the Vortex-Wave equation in terms of velocity (or primitive variables) and then show, for a subsequence of the blobs, the solutions of the Euler equation converge in velocity to a weak solution of the Vortex-Wave equation.
\end{abstract}
\section{Introduction}
The classical evolution of an incompressible, inviscid fluid is governed by the Euler equations.
In $\mathbb{R}^2$ the vorticity formulation of the Euler equations is:
\begin{align}
\partial_tw+v\cdot\nabla w=&0,\label{eeq}\\
v=K\ast w,\ \ \ w(\cdot,0)&=w_0.\notag
\end{align}
Here, $w:\mathbb{R}^2\times \mathbb{R}^+\rightarrow\mathbb{R}$ represents the vorticity of the fluid and $v:\mathbb{R}^2\times \mathbb{R}^+\rightarrow\mathbb{R}$ the velocity.  They are related through the curl operator $w=\nabla\times v$ and the Biot-Savart Law which inverts the curl operator.  In notation we keep throughout, $K$ is the Biot-Savart kernal in two dimensions given by
\begin{align}
K(x)=\frac{1}{2\pi}\frac{x^{\perp}}{|x|^2}.\notag
\end{align}
In this formulation the vorticity is transported by the velocity $v$.  The velocity is divergence free reflecting the incompressibility of the fluid.  A classic existence theorem (originally due to Yudovich \cite{Y} in bounded domains) states:

\begin{theorem}\label{classicaleuler}
Given initial vorticity $w_0\in L^1\cap L^\infty(\mathbb{R}^2)$ there exists a unique weak solution $w\in L^\infty([0,T];L^1\cap L^\infty(\mathbb{R}^2))$ for the vorticity formulation of the Euler equations (\ref{eeq}).
\end{theorem}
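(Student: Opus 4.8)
The plan is to prove the two assertions separately: I would construct a solution by a regularization-and-compactness scheme, and then establish uniqueness by exploiting the borderline (log-Lipschitz) regularity of the velocity field that an $L^1\cap L^\infty$ vorticity generates through the Biot-Savart law.

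For existence, I would first mollify the datum, $w_0^n=\rho_n\ast w_0$ with $\rho_n$ a standard mollifier, so each $w_0^n$ is smooth and $\|w_0^n\|_{L^p}\leq\|w_0\|_{L^p}$ for every $p\in[1,\infty]$. For such smooth, decaying data the equation \eqref{eeq} has a global smooth solution $w^n$ whose vorticity is transported by the divergence-free flow generated by $v^n=K\ast w^n$; consequently all $L^p$ norms are conserved, $\|w^n(t)\|_{L^p}=\|w_0^n\|_{L^p}\leq\|w_0\|_{L^p}$ uniformly in $n$ and $t$. The two a priori estimates I would extract are, first, the uniform bound $\|v^n(t)\|_{L^\infty}\leq C\|w_0\|_{L^1}^{1/2}\|w_0\|_{L^\infty}^{1/2}$, obtained by splitting the convolution $K\ast w^n$ into its singular near-field and integrable far-field and optimizing the splitting radius, and second, the log-Lipschitz modulus of continuity
\[
|v^n(x,t)-v^n(y,t)|\leq C\|w_0\|_{L^1\cap L^\infty}\,|x-y|\bigl(1+\log^+\tfrac{1}{|x-y|}\bigr),
\]
which comes from the same near/far splitting applied to $\nabla K$, whose $|x|^{-2}$ singularity is exactly logarithmically non-integrable. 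These bounds make $\{v^n\}$ uniformly bounded and equicontinuous on compact sets, so by Arzel\`a-Ascoli a subsequence converges locally uniformly to some $v$, while $\{w^n\}$ converges weak-$*$ in $L^\infty$ (and, after controlling the mass at infinity, weakly in $L^1$) to some $w$ with $w=\nabla\times v$. I would then pass to the limit in the distributional formulation of \eqref{eeq} tested against $C_c^\infty$ functions; the only nonlinear term $v^n w^n$ passes because $v^n\to v$ strongly and locally uniformly while $w^n\to w$ weakly.

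For uniqueness, suppose $w_1,w_2$ are two such solutions with the same datum and set $u=v_1-v_2$. Writing the velocity (primitive-variable) form and subtracting, $u$ solves $\partial_t u+v_1\cdot\nabla u+u\cdot\nabla v_2+\nabla q=0$ with $\nabla\cdot u=0$; pairing with $u$ in $L^2$ and using incompressibility to kill the transport and pressure terms gives
\[
\frac{1}{2}\frac{d}{dt}\|u\|_{L^2}^2=-\int_{\mathbb{R}^2}(u\cdot\nabla v_2)\cdot u\,dx\leq\|\nabla v_2\|_{L^p}\,\|u\|_{L^{2p'}}^2.
\]
Here I would invoke the Calder\'on-Zygmund bound with explicit exponent dependence, $\|\nabla v_2\|_{L^p}\leq Cp\,\|w_2\|_{L^p}\leq Cp\,\|w_0\|_{L^1\cap L^\infty}$, together with the interpolation $\|u\|_{L^{2p'}}^2\leq\|u\|_{L^2}^{2/p'}\|u\|_{L^\infty}^{2/p}$ and the uniform bound on $\|u\|_{L^\infty}$. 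Setting $y(t)=\|u(t)\|_{L^2}^2$ this yields a differential inequality of the form $y'\leq Cp\,y^{1-1/p}$, hence $\frac{d}{dt}y^{1/p}\leq C$ and $y(t)\leq(Ct)^p$; letting $p\to\infty$ forces $y\equiv0$ for $t$ below a fixed threshold, and iterating gives $v_1=v_2$ and then $w_1=w_2$ on $[0,T]$.

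The main obstacle I anticipate is the uniqueness argument, precisely because the regularity of $v$ sits at the Lipschitz borderline: $\nabla v$ fails to be bounded, and the whole argument hinges on tracking the linear-in-$p$ blow-up of the Calder\'on-Zygmund constant and balancing it against the interpolation exponents so that the resulting Osgood-type inequality still closes. A secondary technical point, specific to the unbounded domain $\mathbb{R}^2$, is justifying that $u=v_1-v_2$ lies in $L^2$: individual velocities decay only like $|x|^{-1}$ and need not be square-integrable, so I would use that $w_1-w_2$ has vanishing total mass (both solutions share $\int w_0$) to gain the extra decay that places the velocity difference in $L^2$ and legitimizes the energy identity.
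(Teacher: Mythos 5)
The paper never proves this theorem: it is quoted as the classical Yudovich result and the reader is sent to the literature (Majda--Bertozzi, Ch.~8), so the only meaningful comparison is against that standard argument --- which is exactly what your proposal reconstructs. The existence half (mollification, conservation of the $L^p$ norms of the transported vorticity, the uniform $L^\infty$ and log-Lipschitz bounds on $K\ast w$, compactness, passage to the limit in the nonlinearity by strong-times-weak convergence) and the uniqueness half (Yudovich's energy method, the Calder\'on--Zygmund bound $\|\nabla v\|_{L^p}\leq Cp\|w\|_{L^p}$, and the Osgood-type inequality $y'\leq Cp\,y^{1-1/p}$ with $p\to\infty$) are precisely the classical scheme. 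One routine omission: the log-Lipschitz bound gives equicontinuity of $\{v^n\}$ in $x$ only, while Arzel\`a--Ascoli on $\mathbb{R}^2\times[0,T]$ also needs equicontinuity in $t$; this must be extracted from the equation, e.g.\ a uniform bound on $\partial_t v^n$ in $H^{-L}_{loc}$, which is exactly the role of property (v) in Lemma \ref{weakapproxlemma} of this paper, following DiPerna--Majda.

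The genuine gap is in your final technical point, and your proposed repair does not work as stated: vanishing total mass of $w_1-w_2$ is \emph{not} enough to place $u=v_1-v_2$ in $L^2(\mathbb{R}^2)$. On the Fourier side $|\hat{u}(\xi)|\sim|\hat{f}(\xi)|/|\xi|$ with $f=w_1-w_2$, so $\|u\|_{L^2}^2$ is controlled by $\int|\hat{f}(\xi)|^2|\xi|^{-2}\,d\xi$; zero mean gives only $\hat{f}(0)=0$ with $\hat{f}$ continuous, and an $L^1\cap L^\infty$ function with mass spread out to infinity can have $\hat{f}$ vanishing at the origin slower than $(\log(1/|\xi|))^{-1/2}$, making the low-frequency integral diverge. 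Equivalently, in physical space the cancellation argument $\int[K(x-y)-K(x)]f(y)\,dy$ requires the first moment $\int|y||f(y)|\,dy$ to be finite, i.e.\ a moment or compact-support hypothesis that general $L^1\cap L^\infty$ data do not provide. This is precisely why Majda--Bertozzi introduce the radial-energy decomposition (both velocities share one fixed radial part $\bar v$, and the $L^2$ remainders are part of the solution class) and why Yudovich's original theorem is set in a bounded domain. To close your argument you must either build locally finite energy with a common radial part into the definition of the weak solution, run the energy estimate on localized quantities $\int\varphi_R|u|^2\,dx$ and control the cutoff errors, or switch to a Lagrangian uniqueness argument in the style of Marchioro--Pulvirenti. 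Without one of these devices the differential inequality for $y(t)=\|u(t)\|_{L^2}^2$ has no a priori finite left-hand side, so the uniqueness proof as proposed does not close.
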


Many proofs of this theorem can be found in the literature, for example see \cite[Ch. 8]{MR1867882}.  In fact the literature relating to the Euler equation is quite large and instead of listing numerous results we point the interested reader to \cite{MR1867882} and \cite{MR1245492} and the references therein. 

In some situations if the vorticity is reasonable except for $N$ concentrated regions where it is very large it may be better to consider the Vortex-Wave system given by:
\begin{align}
\partial_t\omega+(u+\sum_{i=1}^N h_i)\cdot\nabla \omega=0,\ \ \ &\label{wveq}\\
u=K\ast\omega,\ \ \ \omega_0(\cdot,0)=\omega_0,\ \ \ \ &\notag\\
h_i=a_iK(x-z_i(t)),\ \ \ z_i(0)=z_0,&\notag\\
\frac{d}{dt}z_i(t)=u(z_i,t)+\sum_{i\neq j}h_j(z_i,t).\ &\notag
\end{align}
Here, $h_i$ represents the velocity of a point vortex with strength $a_i$ moving along the path $z_i$.  Each point vortex is moved by the ambient velocity associated with $\omega$ as well as the velocity associated with the other point vorticies.  In turn, the point vorticies influence the flow of the ambient vorticity.  A solution of this system consists of the ambient vorticity $\omega$ along with the paths of point vorticies given by $z_i(t)$.  

The Vortex-Wave equations were introduced by Marchioro and Pulvirenti \cite{MR1098512} where an existence theorem was given by constructing Lagrangian paths.  Concerning this system the authors of \cite{MR1098512} point out two outstanding questions:
\begin{itemize}
\item[I.] Is the solution of the Vortex-Wave equations unique?
\item[II.] Consider solutions of the Euler equations starting with a sequence of initial data which is the sum of an ambient vorticity $\omega_0$ and concentrated vorticity ``blobs.''  Do these solutions converge to a solution of the Vortex-Wave equations as the blobs converges to Dirac distributions?
\end{itemize}
Herein we consider the second question with $N=1$ but first point the interested reader to \cite{MR2529959} where the first question is studied under the assumption that the initial ambient vorticity $\omega_0$ is constant near the point vorticies.

The second question relates to the physical justification of the model as it is intended to model a sharply concentrated vorticity which is not necessarily a point vortex.  If the initial ambient vorticity $\omega_0$ is assumed zero, a satisfactory answer for the second question has been given in \cite{MR937359} and \cite{MR727203} (see also the references therein).  When $\omega_0\neq 0$ ``mixing'' of the ambient vorticity due to the concentrated vorticity complicates the issue significantly.  We consider the case of one vortex blob and ambient vorticity initially in $L^1\cap L^\infty$ then take the limit as the blob tends towards the Dirac distribution.

To start making this idea precise, consider the Euler equation with initial data $w^\epsilon_0=\omega_0+\delta_0^\epsilon$.  We assume $\omega_0\in L^\infty\cap L^1(\mathbb{R}^2)$ and $\delta_0^\epsilon=\epsilon^{-2}\chi_{\Lambda^\epsilon}(x)$ where $\Lambda^\epsilon$ is an open region such that $|\Lambda^\epsilon|=\epsilon^2$.  Here $\omega_0$ generates the ambient velocity field and $\delta_0^\epsilon$ is the ``blob'' which approximates a point vortex.  Such initial data belongs to $L^\infty\cap L^1(\mathbb{R}^2)$ (although not uniformly in $\epsilon$) so Theorem \ref{classicaleuler} implies the existence of a solution $w^\epsilon(x,t)\in L^1\cap L^\infty(\mathbb{R}^2)$ for all $t\geq0$.  The question we attack: Do these solutions tend to a solution of the Vortex-Wave system as $\epsilon\rightarrow 0$?

To answer this question we decompose the solution $w^\epsilon$ into a part which corresponds to $\delta^\epsilon_0$ and another part which corresponds to the ambient vorticity $\omega_0$.  This is done by considering, a posteriori, a linear transport equation which transports the vorticity along paths with velocity $v^\epsilon=K\ast w^\epsilon$.  This decomposition is the main topic of Subsection \ref{decomposition} and yields $\omega^\epsilon(x,t)$ and $\delta^\epsilon(x,t)$ which correspond to $\omega_0$ and $\delta^\epsilon_0$ respectively.  From here the main convergence theorem of this paper is argued in two parts: First, the solutions of the Euler equations are shown to be approximations of the Vortex-Wave equation in a precise sense, this is recorded in Lemma \ref{weakapproxlemma} below.  Second, these approximations are shown to converge to a weak solution of the Vortex-Wave equation.  This is Theorem \ref{maintheorem} below.

One aspect of the analysis we would like to point out is that the convergence arguments are made for the velocity of the solutions (sometimes called primitive variables).  In contrast equations (\ref{eeq}) and (\ref{wveq}) are written as transport equations for the vorticity.  One natural approach with transport equations is to consider the paths along which the vorticity moves, finding uniform bounds and applying compactness theorems to the paths.  In this situation it is difficult to obtain bounds on such paths because the velocity associated with the concentrated blob becomes unbounded in the limit.  To circumvent this difficulty we return to primitive variables and our convergence arguments follow the program of study initiated by DiPerna and Majda in in \cite{MR882068}, \cite{MR877643} and \cite{MR924702}.  .  

We have ``$L^1$ and $L^\infty$ control'' over the ambient vorticity $\omega^\epsilon$ but we do not have this control over the whole system because the vorticity associated with the concentrated blob approximates a Dirac distribution and therefore becomes unbounded in $L^p$ for $p>1$.  On the other hand, generalizing the arguments in \cite{MR727203} we are able to deduce considerable information about the sequence of vortex blobs by analyzing the moments:
\begin{align}
M_\epsilon(t)&=\int_{\mathbb{R}^2}x\delta^\epsilon(x,t)\,dx,\label{moment1}\\
I_\epsilon(t)&=\int_{\mathbb{R}^2}(x-M_\epsilon(t))^2\delta^\epsilon(x,t)\,dx.\label{moment2}
\end{align}
In some sense the extra information about $M_\epsilon$ and $I_\epsilon$ allow us to treat the system as having ``$L^1$ and $L^p$ control'' for $p\in (1,\infty)$.

The uniform bounds on $M_\epsilon$ and $I_\epsilon$ as $\epsilon\rightarrow 0$ are the key estimates used to show the approximate solutions converge to weak solutions of the vortex-wave equation.   In \cite{MR727203} the authors assume the blobs are moving in an ambient velocity field which is Lipschitz but that is too strong for  velocity fields generated by vorticity in $L^\infty\cap L^1$.  In the later case we only have a ``log-Lipschitz'' sense of continuity (see Definition \ref{phi} below).  Nevertheless the analysis in \cite{MR727203} is quite robust and the general ideas can be pushed through in this more general setting, this is detailed in Section \ref{momentsection}.

Before we state the main theorem of this paper we need two more assumptions concerning the initial structure of the vortex blobs.  These assumptions are not very restrictive and merely force the blob to initially behave like a reasonable approximation to the Dirac distribution.

\begin{flushleft}
{\bf Assumption:}\\
\begin{align}
\delta_0^\epsilon=\frac{1}{\epsilon^2}\chi_\Lambda^\epsilon\label{diracshape}
\end{align}
Here $\chi$ is the indicator function and the set $\Lambda^\epsilon$ has measure $\epsilon^2$.
\end{flushleft}

\begin{flushleft}
{\bf Assumption:}\\
For any continuous bounded function $f:\mathbb{R}^2\rightarrow\mathbb{R}$,
\begin{align}
\lim_{\epsilon\rightarrow 0}\int_{\mathbb{R}^2}f(y)\delta^\epsilon_0(y)\,dy = f(z_0).\label{diracIC}
\end{align}
\end{flushleft}

This assumption indicates that $\delta_0^\epsilon$ approximates the Dirac distribution at the point $z_0$ and will be assumed throughout.  We reserve the label $z_0$ specifically for this point.  The next assumption will only be used at the very end of our analysis when proving the convergence associated with the term $h^\epsilon\cdot\nabla h^\epsilon$ in Section \ref{existence}.
\begin{flushleft}
{\bf Assumption:}\\
\end{flushleft}
\begin{align}
I_\epsilon(0)\leq C\epsilon^2.\label{diracstructre}
\end{align}

We now state a lemma describing properties of the approximate sequence.  These results follow quickly from the fact that our approximate sequence is derived from solutions of the Euler equation.  The solutions stated in the lemma are exactly those sketched earlier in the introduction but made precise in Subsection \ref{decomposition}.
\begin{lemma}\label{weakapproxlemma}
Let $w^\epsilon$ be the solution of (\ref{eeq}) given by Theorem \ref{classicaleuler} with initial data $w_0=\omega_0+\delta_0^\epsilon$ where $\omega_0\in L^\infty([0,T],L^1\cap L^\infty(\mathbb{R}^2))$ and $\delta_0^\epsilon$ satisfies (\ref{diracshape}-\ref{diracstructre}).  Write $v^\epsilon=K\ast w^\epsilon$ and let $\omega^\epsilon$ and $\delta^\epsilon$ solve, respectively:
\begin{align}
\partial_t\omega^\epsilon+v^\epsilon\cdot\nabla \omega^\epsilon&=0, &\partial_t\delta^\epsilon+v^\epsilon\cdot\nabla \delta^\epsilon&=0,\notag\\
\omega^\epsilon(\cdot,0)&=\omega_0.& \delta^\epsilon(\cdot,0)&=\delta^\epsilon_0.\notag
\end{align}
Denote by $u^\epsilon=K\ast \omega^\epsilon$ and $h^\epsilon=K\ast\delta^\epsilon$ the corresponding velocities.   

Then the following properties hold:
\begin{itemize}
\item[(i)] The sequence $u^\epsilon$ is uniformly bounded in $L^\infty(\mathbb{R}^2)$, that is
\begin{align}
\sup_{t\in [0,T]}\|u^\epsilon(t)\|_{L^\infty(\mathbb{R}^2)} < C.\notag
\end{align}
\item[(ii)] $\nabla\cdot u^\epsilon =0$ and $\nabla \cdot h^\epsilon=0$ where the divergence is understood in the weak sense.
\item[(iii)] Given any test function $\Phi\in C_\sigma^\infty (\mathbb{R}^2\times [0,T])$:
\begin{align}
0=\int_0^T\int_{\mathbb{R}^2}&\Phi_t u^\epsilon+\nabla \Phi:u^\epsilon\otimes u^\epsilon +\nabla \Phi:h^\epsilon\otimes u^\epsilon\,dx\,dt,\notag\\
&+\int_0^T\int_{\mathbb{R}^2}\Phi_t h^\epsilon+\nabla \Phi:u^\epsilon\otimes h^\epsilon +\nabla \Phi:h^\epsilon\otimes h^\epsilon\,dx\,dt.\label{epsweak}
\end{align}
\item[(iv)] The sequence $\omega^\epsilon$ is uniformly bounded in $L^1\cap L^\infty(\mathbb{R}^2)$, that is
\begin{align}
\sup_{t\in [0,T]}\|\omega^\epsilon(t)\|_{L^\infty\cap L^1(\mathbb{R}^2)} < C.\notag
\end{align}
\item[(v)] The sequence $v^\epsilon$ is uniformly bounded in $Lip([0,T],H^{-L}_{loc}(\mathbb{R}^2))$ for some $L>0$, that is
\begin{align}
\|\rho u^\epsilon(t_1)-\rho u^\epsilon(t_2)\|_{H^{-L}(\mathbb{R}^2)}&\leq C|t_1-t_2|,\notag\\
0\leq t_1,t_2\leq T,\ \ \ \ \ \ \  \forall &\rho\in C^\infty_c(\mathbb{R}^2).\notag
\end{align}
\end{itemize}
\end{lemma}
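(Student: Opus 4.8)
The plan is to prove each of the five properties in turn, drawing throughout on the fact that $w^\epsilon=\omega^\epsilon+\delta^\epsilon$ solves the Euler equation and that $\omega^\epsilon,\delta^\epsilon$ are merely transported (not created or destroyed) by the same divergence-free velocity field $v^\epsilon$. The foundational observation, which I would establish first, is that since $\omega^\epsilon$ and $\delta^\epsilon$ satisfy linear transport equations with the common divergence-free velocity $v^\epsilon$, their $L^1\cap L^\infty$ norms are conserved in time: the flow generated by $v^\epsilon$ is measure-preserving, so $\|\omega^\epsilon(t)\|_{L^p}=\|\omega_0\|_{L^p}$ and $\|\delta^\epsilon(t)\|_{L^p}=\|\delta^\epsilon_0\|_{L^p}$ for every $p$. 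This immediately gives (iv), since $\|\omega_0\|_{L^1\cap L^\infty}$ is a fixed constant independent of $\epsilon$ and $t$.

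For (i), I would use the standard potential-theory estimate that interpolates the two conserved norms of $\omega^\epsilon$. Splitting the Biot-Savart convolution $u^\epsilon=K\ast\omega^\epsilon$ into the region $|x-y|\le 1$ and $|x-y|>1$, the kernel $K$ obeys $|K(x)|\le C/|x|$, so the near-field integral is bounded by $\|K\|_{L^1(B_1)}\|\omega^\epsilon\|_{L^\infty}$ and the far-field by $\|K\|_{L^\infty(B_1^c)}\|\omega^\epsilon\|_{L^1}$. Both factors are uniformly bounded by (iv), yielding the uniform $L^\infty$ bound on $u^\epsilon$ claimed in (i). For (ii), the identity $\nabla\cdot(K\ast f)=0$ holds in the distributional sense for any $f\in L^1$ because $K$ is divergence-free away from the origin and has the structure of a perpendicular gradient; this applies to both $u^\epsilon=K\ast\omega^\epsilon$ and $h^\epsilon=K\ast\delta^\epsilon$.

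For (iii), the weak momentum identity, I would pass from the vorticity transport equations to the velocity formulation. Writing $v^\epsilon=u^\epsilon+h^\epsilon$, the full velocity solves the weak form of the incompressible Euler momentum equation $\partial_t v^\epsilon+\nabla\cdot(v^\epsilon\otimes v^\epsilon)=-\nabla p^\epsilon$; testing against a divergence-free $\Phi\in C^\infty_\sigma$ annihilates the pressure gradient and leaves $\int_0^T\int \Phi_t\cdot v^\epsilon+\nabla\Phi:v^\epsilon\otimes v^\epsilon\,dx\,dt=0$. Expanding $v^\epsilon\otimes v^\epsilon=u^\epsilon\otimes u^\epsilon+h^\epsilon\otimes u^\epsilon+u^\epsilon\otimes h^\epsilon+h^\epsilon\otimes h^\epsilon$ and $\Phi_t\cdot v^\epsilon=\Phi_t\cdot u^\epsilon+\Phi_t\cdot h^\epsilon$ reproduces exactly (\ref{epsweak}). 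The one point requiring care is justifying the equivalence of the vorticity-transport formulation and the weak velocity formulation for these $L^1\cap L^\infty$ data, which follows from the Yudovich theory underlying Theorem \ref{classicaleuler}.

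Finally, for (v), the Lipschitz-in-time continuity in a negative Sobolev space, I would exploit the weak momentum identity once more. Testing the equation against a time-independent $\rho\Psi$ with $\rho\in C^\infty_c$ converts the time difference $\langle \rho v^\epsilon(t_1)-\rho v^\epsilon(t_2),\Psi\rangle$ into a time integral of the flux term $\int\nabla(\rho\Psi):v^\epsilon\otimes v^\epsilon$; this is bounded by $|t_1-t_2|$ times $\|v^\epsilon\otimes v^\epsilon\|_{L^1(\mathrm{supp}\,\rho)}$. The main obstacle, and the reason a negative Sobolev norm rather than $L^2$ appears, is that $h^\epsilon$ is \emph{not} uniformly bounded in $L^2$ near the concentrating blob, so the quadratic flux $v^\epsilon\otimes v^\epsilon$ is not uniformly integrable in any strong sense; one must absorb the singular part into a sufficiently negative Sobolev index $-L$ and lean on the uniform $L^1$ control of $\delta^\epsilon$ together with the uniform $L^\infty$ bound on $u^\epsilon$ to control the cross and ambient terms. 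Estimating the genuinely singular self-interaction $h^\epsilon\otimes h^\epsilon$ is where the moment bounds on $M_\epsilon$ and $I_\epsilon$ eventually become indispensable, but for the purposes of the uniform $H^{-L}$ bound the finite local mass of $\delta^\epsilon$ paired with a choice of large enough $L$ suffices.
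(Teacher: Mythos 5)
Your arguments for (i)--(iv) are correct and essentially identical to the paper's: conservation of $L^p$ norms under divergence-free transport gives (iv), the near/far splitting of the Biot--Savart kernel gives (i), and (ii)--(iii) are the weak primitive-variable form of the momentum equation, which the paper obtains equivalently by adding the decomposed equations (\ref{ueps}) and (\ref{heps}) (the cross terms $\pm\nabla u^\epsilon\cdot h^\epsilon$ cancel in the sum, leaving exactly (\ref{epsweak})).

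The gap is in (v), at precisely the term you wave off: $h^\epsilon\otimes h^\epsilon$. Your claim that ``the finite local mass of $\delta^\epsilon$ paired with a choice of large enough $L$ suffices'' is not correct. The pairing you must bound is $\int\nabla(\rho\Psi):h^\epsilon\otimes h^\epsilon\,dx$, and the only estimate that finite mass provides is $\|\nabla(\rho\Psi)\|_{L^\infty}\,\|h^\epsilon\otimes h^\epsilon\|_{L^1(\mathrm{supp}\,\rho)}$. Increasing $L$ only improves the smoothness of the test function, i.e.\ the first factor; it does nothing to the second, which diverges: near the blob $|h^\epsilon|\sim|x-z^\epsilon(t)|^{-1}$ down to scale $\epsilon$, so $\int_{B_R}|h^\epsilon|^2\,dx\sim\log(1/\epsilon)\rightarrow\infty$ whenever the blob path meets $\mathrm{supp}\,\rho$. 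Worse, no negative norm can hide this: the diagonal entries of $h^\epsilon\otimes h^\epsilon$ are nonnegative, so pairing against a fixed nonnegative test function sitting over the blob shows $\|\rho\, h^\epsilon\otimes h^\epsilon\|_{H^{-L}}\gtrsim\log(1/\epsilon)$ for \emph{every} $L$. A time-Lipschitz bound for $v^\epsilon$ along your route would therefore require genuine cancellation --- the antisymmetry/symmetrization of $K$ against divergence-free test fields (Delort--Schochet style), which is exactly the machinery the paper deploys only later, in Section \ref{existence}, and which there needs the moment bounds of Section \ref{momentsection}; you never invoke it.

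The paper sidesteps the term entirely, and this is the structural point your proposal misses: property (v) is stated (see the displayed inequality, which involves $\rho u^\epsilon$) and proved for $u^\epsilon$ alone, not for $v^\epsilon$, using the decomposed equation (\ref{ueps}). In that equation the blob enters only \emph{linearly}, through $h^\epsilon\cdot\nabla u^\epsilon=\nabla\cdot(h^\epsilon\otimes u^\epsilon)$ and $\nabla u^\epsilon\cdot h^\epsilon$. Both are uniformly bounded in $L^p_{loc}$ for $p\in(1,2)$: $K\in L^p_{loc}$ for $p<2$ together with $\|\delta^\epsilon\|_{L^1}\leq 1$ gives (\ref{comphbound}); combining with the uniform $L^\infty$ bound (i) handles the first term; and the Calderon--Zygmund inequality $\|\nabla u^\epsilon\|_{L^p}\leq C_p\|\omega^\epsilon\|_{L^p}$ with (iv) handles the second. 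No $h^\epsilon\otimes h^\epsilon$ appears, and the DiPerna--Majda Appendix A argument (which also requires uniform local square-integrability, true for $u^\epsilon$ but false for $v^\epsilon$) goes through. Since the compactness argument of Lemma \ref{uexistence} only needs the Lipschitz bound for $u^\epsilon$, proving it for $v^\epsilon$ is both unnecessary and, by your method, unattainable.
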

In the above lemma we have used the notation $A:w\otimes v=\sum_{i,j}A_{i,j}w_iv_j$ and $(\nabla \Phi)_{ij}=\partial_i\Phi_j$.  $C^\infty_\sigma(\Omega)$ denotes the space of smooth divergence free vector fields with compact support in $\Omega$, we will also use $C^\infty_c(\Omega)$ to denote the set of smooth scalar valued functions with compact support in $\Omega$.  Condition (v) is a technical consideration introduced in \cite{MR882068}, it describes how the solution attains the initial data and also plays a role in compactness arguments.  Before stating the main convergence theorem we give the definition of a weak solution for the Vortex-Wave equation in primitive variables.  

\begin{definition}\label{weakvortexdefn}
A weak solution in primitive variables to the Vortex-Wave equation on $[0,T]$ with a single vortex consists of a path $z(t):\mathbb{R}^+\rightarrow\mathbb{R}^2$ and a velocity $u\in L^\infty([0,T],L^2_{loc}(\mathbb{R}^2))$ such that the following hold:
\begin{itemize}
\item[(i)] $\nabla \cdot u=0$ in the weak sense.
\item[(ii)]Given any test function $\Phi\in C_\sigma^\infty (\mathbb{R}^2\times [0,T]\setminus \{(z(t), t)\}_{t\in [0,T]})$,
\begin{align}
0&=\int_0^T\int_{\mathbb{R}^2}\Phi_t u+\nabla \Phi:u\otimes u+\nabla \Phi:h\otimes u \,dx\,dt\notag\\
&\ \ \ \ \ \ \ \ \ \ +\int_0^T\int_{\mathbb{R}^2}\Phi_t h+\nabla \Phi:u\otimes h \,dx\,dt\label{weak}\\
h(x,t)&=K(x-z(t))\notag
\end{align}
\item[(iii)] The velocity $u$ belongs to $Lip([0,T],H^{-L}_{loc}(\mathbb{R}^2))$ for some $L>0$ and $u(0)=u_0\in H^{-L}_{loc}$.
\end{itemize}
\end{definition}
The support of the test functions in the above definition are not allowed to contain the path $(z(t),t)$, this allows us to pass the limits through the approximate sequence avoiding some of the complications involved with a point concentration of vorticity.  $u\in L^\infty([0,T],L^2_{loc}(\mathbb{R}^2))$ is understood to mean
\begin{align}
\max_{0\leq t\leq T}\int_{B_R}|u|^2\,dx\leq C_{R,T}\notag
\end{align}
for any $R>0$.  Here (and throughout) $B_R$ denotes the ball of Radius $R$.  

The first integral in (\ref{weak}) corresponds to the first line in (\ref{wveq}) written in primitive variables while the second corresponds to the ode governing the flow of the vortex.  To interpret the solution of (\ref{wveq}) constructed through Lagrangian in this setting one should add and subtract the cross term indicated by (\ref{ueps}) and (\ref{heps}) to seperate these two parts.

Now we state the main theorem of the paper whose proof is given in Section \ref{existence}.
\begin{theorem}\label{maintheorem}
Let $u^\epsilon(x,t)$ be the sequence of solutions given by Lemma \ref{weakapproxlemma}.  Let the approximate point vortex paths be given by
\begin{align}
z^\epsilon(t)=z_0+\int_0^t u^\epsilon(z^\epsilon(s),s)\,ds.\label{zdefn}
\end{align}
Then there exists a velocity $u \in L^\infty([0,T],L^2_{loc}(\mathbb{R}^2))$, a path $z(t)\in C([0,T];\mathbb{R}^2)$, and a subsequence $\epsilon\rightarrow 0$ (which we do not relabel) such that for any $R>0$,
\begin{align}
\int_0^T\int_{B_R} |u^\epsilon -u|\,dx\,dt \rightarrow 0,\notag\\
\int_0^T\int_{B_R} |u^\epsilon -u|^2\,dx\,dt \rightarrow 0,\notag\\
\sup_{t\in [0,T]}|z^\epsilon(t) -z(t)|\rightarrow 0.\notag
\end{align}
Moreover, $u$ and $z$ are a weak solution of the Vortex-Wave equation on $[0,T]$ in the sense of Definition \ref{weakvortexdefn}.
\end{theorem}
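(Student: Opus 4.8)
The plan is to work throughout in primitive variables, following the DiPerna--Majda program: first extract strongly convergent subsequences of the ambient velocities and of the vortex paths, then use the moment bounds to show the concentrated blobs $\delta^\epsilon$ collapse onto the limiting path, and finally pass to the limit term-by-term in the approximate weak formulation (\ref{epsweak}) to recover (\ref{weak}). For the compactness step, the paths $z^\epsilon$ defined by (\ref{zdefn}) are uniformly Lipschitz in $t$ because property (i) of Lemma \ref{weakapproxlemma} bounds $\|u^\epsilon\|_{L^\infty}$ uniformly; Arzel\`a--Ascoli then yields a subsequence with $z^\epsilon\to z$ uniformly on $[0,T]$ for some $z\in C([0,T];\mathbb{R}^2)$. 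For the velocity, $u^\epsilon=K*\omega^\epsilon$ with $\omega^\epsilon$ bounded in $L^1\cap L^\infty$ by property (iv); since $\nabla K$ is a Calder\'on--Zygmund kernel, $\nabla u^\epsilon$ is bounded in $L^p_{loc}$ for every $p\in(1,\infty)$, so $u^\epsilon$ is bounded in $W^{1,p}_{loc}$. Combining the Rellich embedding $W^{1,p}_{loc}\hookrightarrow\hookrightarrow L^2_{loc}$ with the uniform time-equicontinuity in $H^{-L}_{loc}$ from property (v), an Aubin--Lions argument delivers, along a further subsequence, the strong convergences
\begin{align}
\int_0^T\!\!\int_{B_R}|u^\epsilon-u|\,dx\,dt\to 0,\qquad \int_0^T\!\!\int_{B_R}|u^\epsilon-u|^2\,dx\,dt\to 0,\notag
\end{align}
with $u\in L^\infty([0,T],L^2_{loc})$ inherited from property (i).

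Next I would invoke the moment analysis of Section \ref{momentsection}. The uniform control of $I_\epsilon$ in (\ref{moment2}), made quantitative by Assumption (\ref{diracstructre}), forces $I_\epsilon(t)\to 0$, while a Gronwall-type comparison of the center of mass $M_\epsilon$ in (\ref{moment1}) against $z^\epsilon$ gives $|M_\epsilon(t)-z^\epsilon(t)|\to 0$ uniformly: both obey essentially $\tfrac{d}{dt}(\cdot)=u^\epsilon(\cdot,t)$, since the self-induced contribution $\int h^\epsilon\delta^\epsilon\,dx$ vanishes by antisymmetry of $K$, and their initial values coincide in the limit by Assumption (\ref{diracIC}). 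Together with the first step this yields $M_\epsilon\to z$ and $I_\epsilon\to 0$, so that $\delta^\epsilon(\cdot,t)\rightharpoonup\delta_{z(t)}$. Consequently, on any space-time compact set disjoint from the path $\{(z(t),t)\}$, a Taylor expansion of $K$ about $M_\epsilon$ — the linear term integrating to zero by the definition of $M_\epsilon$, the remainder controlled by $I_\epsilon$ — shows $h^\epsilon\to h:=K(\cdot-z(t))$ uniformly there.

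With these ingredients I would pass to the limit. Fix $\Phi\in C^\infty_\sigma(\mathbb{R}^2\times[0,T]\setminus\{(z(t),t)\})$, whose support $S$ lies at positive distance from the path and, for $\epsilon$ small, also from the concentration region of $\delta^\epsilon$. The terms $\Phi_t u^\epsilon$ and $\nabla\Phi:u^\epsilon\otimes u^\epsilon$ pass to the limit by the strong $L^2_{loc}$ convergence above; the mixed terms $\nabla\Phi:h^\epsilon\otimes u^\epsilon$, $\nabla\Phi:u^\epsilon\otimes h^\epsilon$, and $\Phi_t h^\epsilon$ pass to the limit because $h^\epsilon\to h$ uniformly on $S$ while $u^\epsilon\to u$ strongly. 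The one genuinely delicate term is the self-interaction $\nabla\Phi:h^\epsilon\otimes h^\epsilon$, which must vanish entirely to match (\ref{weak}). Writing $h^\epsilon\otimes h^\epsilon=(h^\epsilon-h)\otimes h^\epsilon+h\otimes(h^\epsilon-h)+h\otimes h$, the first two pieces vanish by uniform convergence on $S$, while $\int_0^T\!\int\nabla\Phi:h\otimes h\,dx\,dt=0$ because on $S$ the point-vortex field is smooth and satisfies $h\cdot\nabla h=-\nabla P$, which integrates to zero against the divergence-free $\Phi$. Hence (\ref{epsweak}) converges to (\ref{weak}).

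To finish, condition (i) of Definition \ref{weakvortexdefn} transfers from property (ii) of Lemma \ref{weakapproxlemma} by weak convergence, and condition (iii) follows by passing the uniform Lipschitz bound of property (v) to the limit and identifying $u(0)=u_0$, so that $(u,z)$ is a weak solution in the required sense. I expect the main obstacle to lie in the vortex part of the third step: making the concentration quantitative enough that the self-interaction $\nabla\Phi:h^\epsilon\otimes h^\epsilon$ is genuinely controlled, rather than merely formally vanishing. This is precisely where Assumption (\ref{diracstructre}) is needed, as flagged in the introduction, and it rests on the robustness of the moment estimates of Section \ref{momentsection} surviving the merely log-Lipschitz — rather than Lipschitz — regularity of the ambient velocity field.
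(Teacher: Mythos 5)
Your compactness step, your treatment of the pure $u^\epsilon$ terms, and your moment analysis all track the paper (Lemmas \ref{uexistence} and \ref{pathlemma}, Theorem \ref{momconvthrm}, claims (\ref{claim1})--(\ref{claim2})). The genuine gap is the assertion that $h^\epsilon\to h=K(\cdot-z(t))$ \emph{uniformly} on compact sets disjoint from the path, via a Taylor expansion of $K$ about $M_\epsilon$ with ``the remainder controlled by $I_\epsilon$.'' That expansion controls only the contribution of the mass of $\delta^\epsilon$ that remains near $M_\epsilon(t)$. The flow filaments the blob, and the portion of $\delta^\epsilon$ lying outside $B_{r_0}(M_\epsilon(t))$ can invade the support of $\Phi$, where $K(x-\cdot)$ is singular; its contribution to $h^\epsilon(x)$ is therefore not controlled by its mass ($\lesssim I_\epsilon/r_0^2$) alone. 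The sharpest pointwise bound available from the structure $\delta^\epsilon=\epsilon^{-2}\chi_{\Lambda^\epsilon_t}$ and the support estimate (\ref{sigmaestimate}) is, by rearrangement,
\begin{align}
\Bigl|\int_{B^c_{r_0}(M_\epsilon(t))}K(x-y)\,\delta^\epsilon(y,t)\,dy\Bigr|\;\lesssim\;\epsilon^{-2}\,|\Sigma(t)|^{1/2}\;\lesssim\;\frac{\sqrt{I_\epsilon(t)}}{\epsilon\, r_0},\notag
\end{align}
and under the log-Lipschitz Gronwall bound (\ref{uI2bound}) with $I_\epsilon(0)\leq C\epsilon^2$ one only gets $I_\epsilon(t)\lesssim \epsilon^{2\exp(-C_{\omega_0}T)}$, so $\sqrt{I_\epsilon(t)}/\epsilon\sim\epsilon^{\exp(-C_{\omega_0}T)-1}\rightarrow\infty$. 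Thus the available estimates do not even keep $h^\epsilon$ uniformly bounded on the support of $\Phi$, let alone uniformly convergent (this is precisely the loss caused by the merely log-Lipschitz ambient field: the exponent $\exp(-C_{\omega_0}T)<1$ degrades the rate $I_\epsilon(0)\leq C\epsilon^2$). Since your passage to the limit in $\Phi_t h^\epsilon$, in both mixed terms, and in the self-interaction term all rest on this uniform convergence, all of those steps are unsupported as written.

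This is exactly the difficulty that Section \ref{techsec} of the paper is built to circumvent: the paper never claims pointwise or uniform convergence of $h^\epsilon$. Instead it swaps the order of integration so the kernel is attached to the other factors, e.g.\ for (\ref{claim4}) one studies $f^\epsilon(y,t)=\int\nabla\Phi(x):K(x-y)\otimes u^\epsilon(x)\,dx$, which is uniformly equicontinuous and bounded by Lemmas \ref{equlemma} and \ref{inftylemma} because $\nabla\Phi\,u^\epsilon$ is uniformly in $L^p$, $p>2$; then Lemma \ref{bigguns} converts the moment information ($M_\epsilon\to z$ uniformly, $I_\epsilon\to 0$) directly into the desired limit of $\int f^\epsilon\delta^\epsilon\,dy$. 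For the self-interaction term (\ref{hconv1}) the same scheme needs only a uniform $L^{\tilde p}$ bound, $\tilde p>2$, on $h^\epsilon$ near the support of $\Phi$ --- an integrated substitute for the $L^\infty$ control that fails --- and that is what the decomposition $h^\epsilon=h^{1,\epsilon}+h^{2,\epsilon}$ plus the $L^q$ estimate of $\chi_{\Sigma(t)}\delta^\epsilon$ (where assumption (\ref{diracstructre}) enters) provides. To repair your proof you must replace the claimed uniform convergence of $h^\epsilon$ by this weaker, duality-type control; your identification of where assumption (\ref{diracstructre}) is needed is correct, but the mechanism you propose for exploiting it does not close.
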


Two final remarks.  The convergence proved herein is global in the following sense.  One fixes the interval $[0,T]$, $T>0$ is arbitrary, then following the arguments below the convergence will hold on $[0,T]$.  Concerning the terms involving $h^\epsilon\rightharpoonup h$ we point out that the convergence is given in primitive variables but our control over the blobs is in terms of the vorticity.  To bridge this gap we use a technique which is based on changing the order of integration to ``give'' the kernel $K$ to the other terms in the limit.  Heuristically, to show
\begin{align}
\int_{\mathbb{R}} \phi(x)u^\epsilon(x) h^\epsilon(x)\,dx-\int_{\mathbb{R}} \phi(x)u^\epsilon(x) h(x)\,dx\rightarrow 0\notag
\end{align}
we would consider instead
\begin{align}
\int_{\mathbb{R}}\left(\int_{\mathbb{R}}\phi(x)u^\epsilon(x)K(y-x)\,dx\right)\delta^\epsilon(y)\,dy-\int_{\mathbb{R}}\phi(x)u^\epsilon(x)K(z(t)-x)\,dx.\notag
\end{align}
As indicated by Definition \ref{weakvortexdefn} the test functions $\phi$ will be supported away from $z(t)$ so that $K(z(t)-x)$ is smooth in the domain of integration for the second integral.  The kernel $K$ has a smoothing effect on the the term $\phi u^\epsilon$ recorded in Lemma \ref{equlemma}.  For reasonable functions this smoothing effect combined with the moment bounds are enough to imply convergence, this general statement is recorded in Lemma \ref{bigguns}.  The smoothing effect of the Biot-Savart kernel has been used before to prove existence of vortex sheets, see \cite{MR1102579} (also \cite{MR1842346}, \cite{MR1254126} and \cite{MR1326916}).  In those situations a sequence of initial data approximates a bounded measure and estimates are obtained to show the vorticity does not concentrate to a point in the limit.  In our situation the vorticity is necessarily concentrated to a point on the path of the vortex $z(t)$ but this path is not in the support of test functions allowed in Definition \ref{weakvortexdefn} so we sidestep some of the complications associated with vorticity concentrating to a point.  In trade we are required to establish the movement of the vortex through (\ref{weak}).

It is evident from the statement of Lemma \ref{weakapproxlemma} and Theorem \ref{maintheorem} our analysis relies on classical theorems for the Euler equation and the work of DiPerna and Majda.  When similar claims are found in the literature we only sketch proofs or point the reader to the literature and focus our efforts on the properties which are particular to this problem or not readily available in the literature.  Specifically, many aspects of Section \ref{solncand} are direct applications of the existing literature relating to the Euler equation so we outline the proofs.  Sections \ref{momentsection} through \ref{existence} are particular to this problem so we give more detail therein.

\section{Construction of the Approximate Sequence and Solution Candidate}\label{solncand}
\subsection{Decomposition of Solutions for the Euler Equation}\label{decomposition}

Consider the vorticity formulation of the Euler equation with initial data as described in the introduction:
\begin{align}
w^\epsilon_0&=\omega_0+\delta_0^\epsilon.\notag
\end{align}
We take $\omega_0\in L^\infty\cap L^1(\mathbb{R}^2)$ and $\delta_0^\epsilon$ to satisfy (\ref{diracshape}-\ref{diracstructre}). 
We take the vortex to have unit ``strength'' so
\begin{align}
\int_{\mathbb{R}^2}\delta_0^\epsilon(x)\,dx=1 \ \ \ \ \ \ \ \forall \epsilon>0.\notag
\end{align}
This is not a restriction as the analysis herein can be carried out replacing $\delta_0^\epsilon$ with $a\delta_0^\epsilon$ where $a\in\mathbb{R}^2$.

The initial data $w^\epsilon_0$ belongs to $L^\infty\cap L^2(\mathbb{R}^2)$ (though not uniformly in $\epsilon$) so Theorem \ref{classicaleuler} implies the existence of a solution $w^\epsilon(x,t)\in L^\infty([0,T];L^1\cap L^\infty(\mathbb{R}^2))$.  Let $v^\epsilon=K\ast w^\epsilon$ be the associated velocity.  Now consider a related linear transport equation:
\begin{align}
\partial_tf+v^\epsilon\cdot\nabla f&=0.\label{aleq}
\end{align}
Solve separately this equation with initial data $\omega_0$ and $\delta^\epsilon$.  Denote the solution of this equation corresponding to initial data $\omega_0(x)$ by $\omega^\epsilon(x,t)$ and the solution corresponding to $\delta_0^\epsilon(x)$ by $\delta^\epsilon(x,t)$.  Furthermore, let $u^\epsilon(x,t)=K\ast \omega^\epsilon$ and $h^\epsilon=K\ast\delta^\epsilon$ denote the associated velocities respectively.  Using the linearity of equation (\ref{aleq}) and the uniqueness of solutions for the Euler equation given by Theorem \ref{classicaleuler} we can conclude
\begin{align}
w^\epsilon(x,t)&=\omega^\epsilon(x,t)+\delta^\epsilon(x,t),\notag\\
v^\epsilon(x,t)&=u^\epsilon(x,t)+h^\epsilon(x,t).\notag
\end{align}
Equation (\ref{aleq}) is a transport equation so the divergence free property of $u$ implies solutions will conserve their $L^p$ norms.  We record this property here because we will use it later:
\begin{align}
\|\omega^\epsilon(t)\|_{L^p(\mathbb{R}^2)}&\leq \|\omega_0\|_{L^p(\mathbb{R}^2)}\ \ \ \forall p\in [1,\infty],\label{transportconservation}\\
\|\delta^\epsilon(t)\|_{L^1(\mathbb{R}^2)}&\leq \|\delta^\epsilon_0\|_{L^1(\mathbb{R}^2)}=1.\label{htransportconserve}
\end{align}

\subsection{Solution Candidate}

Returning to primitive variables we rewrite the solutions constructed in the previous subsection:
\begin{align}
\partial_t u^\epsilon + u^\epsilon\cdot\nabla u^\epsilon + h^\epsilon\cdot\nabla u^\epsilon-\nabla u^\epsilon\cdot h^\epsilon+\nabla p_u&=0,\label{ueps}\\
\partial_t h^\epsilon + u^\epsilon\cdot\nabla h^\epsilon + h^\epsilon\cdot\nabla h^\epsilon+\nabla u^\epsilon\cdot h^\epsilon+\nabla p_h&=0.\label{heps}
\end{align}
Here $u^\epsilon$ and $h^\epsilon$ are divergence free functions and $p_u$ and $p_h$ are the associated pressures.  The term $\nabla u^\epsilon\cdot h^\epsilon=\sum_i h^\epsilon_i\nabla u^\epsilon_i$ balances the other terms involving $u^\epsilon$ and $h^\epsilon$ so that when the curl is taken one recovers the transport equations governing $\omega^\epsilon$ and $\delta^\epsilon$.  These equations should be interpreted in a weak sense.   We now sketch the proof of Lemma \ref{weakapproxlemma}.
\begin{proof}(Proof of Lemma \ref{weakapproxlemma}).
These statements follow directly from well known properties of the Biot-Savart kernel and solutions of the Euler equation so we only indicate parts of the proof here.  Properties (ii) and (iii) are just the weak interpretation of (\ref{ueps}) and (\ref{heps}) but added together.  Property (iv) is a direct consequence of (\ref{transportconservation}).  By a well known estimate of the Biot-Savart kernel (see \cite[p. 313]{MR1867882}) Property (iv) implies Property (i).  

Checking Property (v) is straightforward using (\ref{ueps}) and following Appendix A in \cite{MR882068} (or \cite[p. 395]{MR1867882}).  The main difference in this setting are the terms with $h^\epsilon$ which are handled in the following way.  First, (\ref{htransportconserve}) implies the sequence $\delta^\epsilon$ is uniformly bounded in $L^\infty([0,T];L^1(\mathbb{R}^2))$.  Also, the Biot-Savart kernel $K\in L^p_{loc}$ for any $p\in [1,2)$ so
\begin{align}
\sup_{t\in[0,T]}\|\rho h^\epsilon(t)\|_{L^p(\mathbb{R}^2)}<C \ \ \ \ \ \ \ \forall \rho\in C^\infty_c(\mathbb{R}^2),\ \ p\in [1,2). \label{comphbound}
\end{align}
In this bound the constant $C$ depends on the support of $\rho$ but not on $\epsilon$.  Combining this with Item (i) we deduce
\begin{align}
\sup_{t\in[0,T]}\|\rho (h^\epsilon(t)\otimes u^\epsilon(t))\|_{L^p(\mathbb{R}^2)}<C \ \ \ \ \ \ \ \forall \rho\in C^\infty_c(\mathbb{R}^2),\ \ p\in [1,2).\notag
\end{align}
Writing $\nabla u^\epsilon$ in terms of $\omega^\epsilon$ using a singular integral operator, then using the Calderon-Zygmond inequality (see \cite[p. 73]{MR1867882} and \cite{MR0290095}) one finds
\begin{align}
\|\nabla u^\epsilon(t)\|_{L^p(\mathbb{R}^2)}\leq C_p\|\omega^\epsilon(t)\|_{L^p(\mathbb{R}^2)}\notag
\end{align}
for any $p\in (1,\infty)$.
Combining this with property (iv) and (\ref{comphbound}) establishes 
\begin{align}
\sup_{t\in[0,T]}\|\rho (\nabla u^\epsilon(t)\cdot h^\epsilon(t))\|_{L^p(\mathbb{R}^2)}<C \ \ \ \ \ \ \ \forall \rho\in C^\infty_c(\mathbb{R}^2),\ \ p\in (1,2).\notag
\end{align}
This is enough to establish (v) following the arguments in Appendix A of \cite{MR882068}.\qed
\end{proof}

With the properties established in Lemma \ref{weakapproxlemma} we can use Theorems 10.1 and 10.2 in \cite{MR1867882} to find the solution candidate.  This next lemma establishes many of the properties stated in Theorem \ref{maintheorem}.
\begin{lemma}\label{uexistence}
There exists a function $u\in L^\infty([0,T],L^2_{loc}(\mathbb{R}^2))$ and a subsequence $u^\epsilon$ (which we do not relabel) such that
\begin{align}
\iint_\Omega |u^\epsilon -u|\,dx\,dt \rightarrow 0,\label{l1convergence}\\
\iint_\Omega |u^\epsilon -u|^2\,dx\,dt \rightarrow 0.\label{l2convergence}
\end{align} 
\end{lemma}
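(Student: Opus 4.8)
The goal is strong $L^2$ compactness of the ambient velocities $u^\epsilon$, and I would organize the argument around the fact that, unlike the blob $\delta^\epsilon$, the ambient vorticity $\omega^\epsilon$ carries no concentration: by property (iv) it is bounded in $L^1\cap L^\infty$ uniformly in $\epsilon$. This places $u^\epsilon$ squarely in the Yudovich (non-concentrating) regime of the DiPerna--Majda theory, so the cited compactness results (Theorems 10.1 and 10.2 of \cite{MR1867882}) apply once their hypotheses are verified. I would first reduce everything to proving
\[
\iint_\Omega |u^\epsilon - u|^2\,dx\,dt\to 0
\]
on an arbitrary bounded space-time cylinder $\Omega=B_R\times[0,T]$: the $L^1$ convergence then follows from Cauchy--Schwarz on the finite-measure set $\Omega$, while conversely the uniform $L^\infty$ bound (i) upgrades any $L^1_{loc}$ convergence to $L^2_{loc}$, so the two displayed statements are interchangeable here.

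For the hypotheses I would assemble two kinds of a priori control. Spatially, writing $\nabla u^\epsilon$ through the singular integral operator associated with $K$ and applying the Calder\'on--Zygmund inequality (exactly as in the proof of (v)) gives $\|\nabla u^\epsilon(t)\|_{L^p}\le C_p\|\omega^\epsilon(t)\|_{L^p}$ for $p\in(1,\infty)$; combined with (iv) and the pointwise bound (i) this bounds $u^\epsilon$ uniformly in $L^\infty([0,T],W^{1,p}(B_R))$ for every $p\in(1,\infty)$. In time, property (v) supplies the equicontinuity estimate $\|\rho u^\epsilon(t_1)-\rho u^\epsilon(t_2)\|_{H^{-L}}\le C|t_1-t_2|$ for $\rho\in C^\infty_c$, i.e.\ a uniform Lipschitz bound on $\partial_t(\rho u^\epsilon)$ valued in $H^{-L}(B_R)$. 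The compact embedding $W^{1,p}(B_R)\hookrightarrow\hookrightarrow L^2(B_R)$ together with the continuous embedding $L^2(B_R)\hookrightarrow H^{-L}(B_R)$ then furnishes precisely the ingredients of an Aubin--Lions--Simon compactness statement.

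The substantive step is upgrading these bounds to strong convergence, which I would carry out by the mollification argument underlying the cited theorems: after localizing with $\rho$, the spatially mollified family $J_\eta u^\epsilon$ is equicontinuous in both space and time uniformly in $\epsilon$ (Arzel\`a--Ascoli), while the gradient bound controls the mollification error $\|u^\epsilon - J_\eta u^\epsilon\|_{L^2(B_R)}$ uniformly in $\epsilon,t$ and small in $\eta$. Extracting a convergent subsequence of $J_\eta u^\epsilon$ and sending $\eta\to0$ produces a subsequence of $u^\epsilon$ that is Cauchy in $L^2(B_R\times[0,T])$. A diagonal extraction over $R=1,2,\dots$ then yields a single subsequence and a limit $u\in L^\infty([0,T],L^2_{loc})$ (the $L^\infty$ bound of (i) passing to the limit by weak-$*$ lower semicontinuity) for which both convergences hold.

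I expect the only genuine obstacle to be ensuring that no compactness is lost to concentration or oscillation in the passage from uniform bounds to strong convergence --- exactly the phenomenon the DiPerna--Majda framework is designed to control. Here, however, that difficulty has been defused at the outset: the decomposition of Subsection \ref{decomposition} confines all concentration to $\delta^\epsilon$ (handled separately through the moments $M_\epsilon,I_\epsilon$), leaving $\omega^\epsilon$ with the full $L^1\cap L^\infty$ control that rules out concentration of the ambient vorticity. Consequently the compactness of $u^\epsilon$ reduces to a routine application of the cited theorems, and the delicate analysis of the paper resides elsewhere.
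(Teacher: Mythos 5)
Your proposal is correct and follows essentially the same route as the paper: the paper's proof simply cites Theorems 10.1 and 10.2 of \cite{MR1867882}, noting that they rest on Properties (i), (iv), and (v) of Lemma \ref{weakapproxlemma} together with the Calder\'on--Zygmund inequality, the Sobolev inequality, and Lions--Aubin compactness --- precisely the ingredients you assemble and then unwind in detail (mollification, Arzel\`a--Ascoli, diagonal extraction). Your observation that the decomposition confines all concentration to $\delta^\epsilon$, leaving $u^\epsilon$ in the non-concentrating Yudovich regime, is exactly why the paper treats this lemma as a direct application of the existing literature.
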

\begin{proof}
This is a direct application of the work done in proving Theorems 10.1 and 10.2 in \cite{MR1867882}.  Those theorems in turn rely in Properties (i), (iv), and (v) in Lemma \ref{weakapproxlemma} in combination with the Lions-Aubin Compactness Lemma, the Calderon-Zygmund Inequality, and the Sobolev Inequality.  \qed
\end{proof}
This lemma rules out ``oscillations'' in the limiting process.  In \cite{MR1867882}, Theorems 10.1 and 10.2, the authors assume an additional comparability with the Euler equation and deduce existence of a solution for the Euler equation.  Here Property (iii) of Lemma \ref{weakapproxlemma} holds instead and we prove later that the function $u$ given above is a weak solution for the Vortex-Wave equation in the sense of Definition \ref{weakvortexdefn}

\subsection{Path of the Point Vortex}

Now we are in a position to construct the path of the point vortex.  
\begin{lemma}\label{pathlemma}
Let $u^\epsilon(x,t)$ be the sequence of solutions given by Lemma \ref{weakapproxlemma}.  Furthermore, let $z^\epsilon(t)$ be defined by (\ref{zdefn}) for $\epsilon\geq 0$.  Then, there exists a subsequence $z^\epsilon$ and a continuous function $z(t)$ such that
\begin{align}
\sup_{t\in [0,T]}|z^\epsilon(t)-z(t)|\rightarrow 0.\notag
\end{align}
\end{lemma}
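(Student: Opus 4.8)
The plan is to recognize this as a direct application of the Arzel\`a--Ascoli theorem to the family $\{z^\epsilon\}_{\epsilon>0}\subset C([0,T];\mathbb{R}^2)$. The only nontrivial input required is the uniform $L^\infty$ bound on the ambient velocity recorded in Property (i) of Lemma \ref{weakapproxlemma}; everything else is a routine consequence of the integral formulation (\ref{zdefn}). I would first remark that each $z^\epsilon$ is well defined: since $u^\epsilon=K\ast\omega^\epsilon$ with $\omega^\epsilon\in L^\infty([0,T],L^1\cap L^\infty(\mathbb{R}^2))$ uniformly by Property (iv), the velocity $u^\epsilon$ is log-Lipschitz in space (Definition \ref{phi}), so the integral equation (\ref{zdefn}) admits a unique continuous solution on $[0,T]$ by Osgood's criterion. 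It is worth emphasizing that the path is driven by $u^\epsilon$ alone and not by the full velocity $v^\epsilon=u^\epsilon+h^\epsilon$; the self-induced field $h^\epsilon$ of the blob is exactly what fails to be bounded in the limit, so restricting to $u^\epsilon$ is what makes the following estimates uniform in $\epsilon$.

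Next I would establish the two hypotheses of Arzel\`a--Ascoli. Writing $C$ for the constant from Property (i), uniform boundedness follows immediately from
\begin{align}
|z^\epsilon(t)-z_0|=\left|\int_0^t u^\epsilon(z^\epsilon(s),s)\,ds\right|\leq\int_0^T\|u^\epsilon(s)\|_{L^\infty(\mathbb{R}^2)}\,ds\leq CT,\notag
\end{align}
so that all the paths remain in a fixed ball. Equicontinuity follows from the same bound, and in fact yields a uniform Lipschitz estimate: for $0\leq t_1\leq t_2\leq T$,
\begin{align}
|z^\epsilon(t_2)-z^\epsilon(t_1)|=\left|\int_{t_1}^{t_2}u^\epsilon(z^\epsilon(s),s)\,ds\right|\leq C|t_2-t_1|.\notag
\end{align}
Thus the family is uniformly bounded and uniformly Lipschitz with a constant independent of $\epsilon$.

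With these two facts in hand, Arzel\`a--Ascoli provides a subsequence (which I would not relabel, to match the convention already adopted in Lemma \ref{uexistence}) and a limit $z\in C([0,T];\mathbb{R}^2)$ with $\sup_{t\in[0,T]}|z^\epsilon(t)-z(t)|\to 0$; the limit $z$ inherits the Lipschitz constant $C$. I expect no genuine obstacle in this lemma: the content is entirely contained in the uniform $L^\infty$ bound on $u^\epsilon$, and the compactness argument is standard. The subtle issues are deferred to later sections, namely (a) verifying that this limit path $z(t)$ is consistent with the subsequence extracted for the velocity convergence, which is handled by passing to a common further subsequence in Theorem \ref{maintheorem}, and (b) showing that $z(t)$ actually satisfies the correct vortex equation of motion encoded in (\ref{weak}) --- this lemma asserts only the existence of a uniform limit, not its characterization.
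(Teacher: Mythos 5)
Your proof is correct and follows essentially the same route as the paper: Property (i) of Lemma \ref{weakapproxlemma} gives uniform boundedness and equicontinuity (indeed a uniform Lipschitz bound) for the paths $z^\epsilon$, and Arzel\`a--Ascoli then yields the uniformly convergent subsequence. The extra remarks on well-posedness of (\ref{zdefn}) via the log-Lipschitz property and on deferring the characterization of $z(t)$ are accurate but not needed beyond what the paper itself does.
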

\begin{proof}
Property (i) of Lemma \ref{weakapproxlemma} implies $u^\epsilon\in L^\infty(\mathbb{R}^2)$ uniformly in time and uniformly in $\epsilon$.  Therefore $z^\epsilon(t)$ is a sequence of equicontinuous and uniformly bounded functions on some set $B_R(z_0)\times [0,T]$.  The Arzella-Ascoli theorem gives uniform convergence of a subsequence to a continuous function $z(t)$.  \qed
\end{proof}

In the above lemma we do not prove $z(t)$ solves $\dot{z}(t)=u(z(t),t)$ although that is what we expect.  In the final section we will show $h(x,t)=K(z(t)-x)$ satisfies (\ref{weak}) which contains some of the information from this ODE.

\section{Estimates for the Vorticity Moments}\label{momentsection}
In this section we work in a slightly simplified situation and prove a theorem which is a generalization of Theorem 2.1 in \cite{MR727203}.  Later it will be applied to the sequence constructed in the proceeding section.  We consider a single vortex blob moving in an external field $F$.

\begin{definition}\label{phi}
We say a vector field $v(x)$ is ``log-Lipschitz'' if $|v(x)-v(y)|\leq C\phi(|x-y|)$ for all $x,y$ where $\phi$ is given by:
\begin{align}
\phi(|x|)=\left\{ 
\begin{array}{lcl}
|x|(1-\log|x|) &\text{if} &|x|<1\\
1 &\text{otherwise}
\end{array}\right .\notag
\end{align}
\end{definition}

Let $F(x,t):\mathbb{R}^2\times\mathbb{R}^+\rightarrow \mathbb{R}^2$ be a divergence free ``uniformly log-Lipschitz'' vector field, that is $|F(x,t)-F(y,t)|\leq C_F\phi(|x-y|)$ for all $t\in[0,T]$.
In the subsequent chapters we will choose $F=u^\epsilon$ to apply the analysis of this section to the general situation.  It is well known that $\omega\in L^\infty([0,T];L^1\cap L^\infty(\mathbb{R}^2))$ implies $u=K\ast \omega$ is uniformly log-Lipschitz with constant depending only on the $L^1\cap L^\infty$ norm of $\omega$, (see \cite[p. 315]{MR1867882}).  Therefore we are careful throughout to bound only using $C_F$ as described above so that when we replace $F$ by $u^\epsilon$ we will have uniform bounds.

Within this section $\delta^\epsilon(x,t)$ denotes the solution of
\begin{align}
\partial_t\delta^\epsilon(x,t)&+(h^\epsilon+F)\cdot\nabla \delta^\epsilon=0,\notag\\
\delta^\epsilon(x,0)&=\epsilon^{-2}\chi_{\Lambda^\epsilon}(x),\notag\\
h^\epsilon&=K\ast \delta^\epsilon.\notag
\end{align}
Associated with this system are the equations for the trajectory $x(t,x_0)$ of a particle initially at position $x_0$ and moved by the velocity field $h^\epsilon+F$:
\begin{align}
\frac{d}{dt}x^\epsilon(t,x_0)&=h^\epsilon(x^\epsilon(t,x_0),t)+F(x^\epsilon(t,x_0),t),\notag\\
x(0,x_0)&=x_0.\notag
\end{align}
The first system above is the situation viewed in Eulerian coordinates while the second is the situation viewed in Lagrangian coordinates.  We will use both.  For the trajectory starting at $z_0$ (corresponding to the initial position of the blob) we reserve the notation $z^\epsilon(t)$.

\subsection{Uniform Moment Bounds}

Define:
\begin{align}
M_\epsilon(t)&=\int_{\mathbb{R}^2}x\delta^\epsilon(x,t)\,dx=\epsilon^{-2}\int_{\Lambda^\epsilon}x^\epsilon(t,x)\,dx,\notag\\
I_\epsilon(t)&=\int_{\mathbb{R}^2}(x-M_\epsilon(t))^2\delta^\epsilon(x,t)\,dx=\epsilon^{-2}\int_{\Lambda^\epsilon}(x^\epsilon(t,x)-M_\epsilon(t))^2\,dx.\notag
\end{align}
The first goal of this section is to investigate how these quantities change through motion in the ambient fluid $F$.  We note that in the absence of the ambient vector field ($F=0$) both quantities are conserved.

Let $\Lambda_t^\epsilon$ be the image of $\Lambda^\epsilon$ under the mapping $x_0\rightarrow x^\epsilon(t,x_0)$.  Then,
\begin{align}
\frac{d}{dt}M_\epsilon(t)=\epsilon^{-2}\int_{\Lambda^\epsilon_t}F(x,t)\,dx.\label{Mbound}
\end{align}
The above relation is exactly equation (2.10) in \cite{MR727203} so we offer it without proof.  Also,
\begin{align}
\frac{d}{dt}I_\epsilon(t)=2\epsilon^{-2}\int_{\Lambda^\epsilon}(x^\epsilon(t,x)-M_\epsilon(t))\cdot F(x^\epsilon(t,x),t)\,dx.\notag
\end{align}
Using the log-Lipschitz property of $F$ yields
\begin{align}
|\frac{d}{dt}I_\epsilon(t)|\leq 2C_F \phi(I_\epsilon(t))\notag.
\end{align}
The above two relations are exactly (2.12) and (2.13) in \cite{MR727203} but using the log-Lipschitz property instead of the Lipschitz property.
If $I_\epsilon(0)\leq \exp(-\exp C_FT)$ this implies (see for example \cite[p. 319]{MR1867882}): 
\begin{align}
I_\epsilon(t)\leq eI_\epsilon(0)^{\exp(-C_Ft)}&& \forall t\in[0,T].\label{I2bound}
\end{align}
Assumption (\ref{diracIC}) assures this condition on $I_\epsilon(0)$ can be enforced by taking $\epsilon$ sufficiently small once $T$ is fixed.

\subsection{Convergence of First Moments}
\begin{theorem}\label{momconvthrm}
Let $\delta^\epsilon_0$ satisfy assumption (\ref{diracIC}) and fix $T>0$.  Then, 
\begin{align}
\lim_{\epsilon\rightarrow 0}\sup_{t\in [0,T]}|M_\epsilon(t)-z^\epsilon(t)|=0.\notag
\end{align}
In fact, if $I_\epsilon(0)\leq \exp(-\exp C_FT)$, then
\begin{align}
|M_\epsilon(t)-z^\epsilon(t)| \leq e\left(|z_0-M_\epsilon(0)|+pTC_F(eI_\epsilon(0)^{\exp(-C_Ft)})^{\frac{1}{2q}}\right)^{\exp(-C_Ft)}\label{momentconvest}
\end{align}
for all $t\in [0,T]$ where $\frac{1}{p}+\frac{1}{q}=1$, and $p,q\in (1,\infty)$.
\end{theorem}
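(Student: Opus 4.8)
The plan is to compare the two trajectories $M_\epsilon(t)$ and $z^\epsilon(t)$ by forming the difference of their evolution equations and deriving a differential inequality for $|M_\epsilon(t)-z^\epsilon(t)|$ that closes via Gronwall-type (log-Lipschitz) analysis. Recall from \eqref{Mbound} that $\frac{d}{dt}M_\epsilon(t)=\epsilon^{-2}\int_{\Lambda^\epsilon_t}F(x,t)\,dx$, which I rewrite as the average of $F$ over the transported blob, namely $\epsilon^{-2}\int_{\Lambda^\epsilon}F(x^\epsilon(t,x),t)\,dx$. On the other hand, $z^\epsilon(t)$ is by definition the trajectory through $z_0$, so $\frac{d}{dt}z^\epsilon(t)=F(z^\epsilon(t),t)$ (here $F=u^\epsilon$ and the self-velocity $h^\epsilon$ plays no role in $z^\epsilon$ to leading order). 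Subtracting, the key identity is
\begin{align}
\frac{d}{dt}\bigl(M_\epsilon(t)-z^\epsilon(t)\bigr)=\epsilon^{-2}\int_{\Lambda^\epsilon}\bigl(F(x^\epsilon(t,x),t)-F(z^\epsilon(t),t)\bigr)\,dx.\notag
\end{align}

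First I would estimate the integrand using the log-Lipschitz bound on $F$, giving $|F(x^\epsilon(t,x),t)-F(z^\epsilon(t),t)|\leq C_F\,\phi(|x^\epsilon(t,x)-z^\epsilon(t)|)$. The inner displacement splits as $|x^\epsilon(t,x)-z^\epsilon(t)|\leq |x^\epsilon(t,x)-M_\epsilon(t)|+|M_\epsilon(t)-z^\epsilon(t)|$. The first piece is controlled on average by the spread $I_\epsilon(t)$: by Cauchy--Schwarz (or Jensen applied to the concave modulus $\phi$), the blob-average of $\phi(|x^\epsilon-M_\epsilon|)$ is bounded in terms of $\phi\bigl(I_\epsilon(t)^{1/2}\bigr)$, and more sharply, by Hölder with exponents $p,q$, one extracts the factor $(I_\epsilon(t))^{1/(2q)}$ appearing in \eqref{momentconvest}. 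The second piece feeds back the quantity $|M_\epsilon(t)-z^\epsilon(t)|$ we are trying to bound. Writing $D_\epsilon(t)=|M_\epsilon(t)-z^\epsilon(t)|$ and using the near-concavity/subadditivity of $\phi$, I expect a differential inequality of the form
\begin{align}
\frac{d}{dt}D_\epsilon(t)\leq C_F\,\phi\bigl(D_\epsilon(t)\bigr)+pTC_F\bigl(I_\epsilon(t)\bigr)^{1/(2q)},\notag
\end{align}
with the forcing term driven by the already-established bound \eqref{I2bound} on $I_\epsilon(t)$.

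Finally I would integrate this inequality. Treating the $I_\epsilon$-forcing as a known small quantity and invoking the log-Lipschitz Osgood/Gronwall lemma (the same double-exponential mechanism cited at \cite[p.~319]{MR1867882} that produced \eqref{I2bound}), the solution of $\dot D\leq C_F\phi(D)+(\text{source})$ with initial data $D_\epsilon(0)=|z_0-M_\epsilon(0)|$ yields precisely the stated estimate \eqref{momentconvest}, with the outer exponent $\exp(-C_Ft)$ coming from linearizing $\phi(s)\sim s(1-\log s)$ and the $e$ prefactor from the same bootstrapping. The $\sup$-in-$t$ convergence claim then follows because, by Assumption \eqref{diracIC}, $M_\epsilon(0)\to z_0$ and $I_\epsilon(0)\to 0$ as $\epsilon\to 0$, so both the initial discrepancy and the source term vanish uniformly on $[0,T]$.

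The main obstacle I anticipate is handling the log-Lipschitz modulus rather than a genuine Lipschitz bound: $\phi$ is not Lipschitz at the origin, so the feedback term $\phi(D_\epsilon(t))$ cannot be absorbed into a linear Gronwall estimate and instead requires the Osgood-type integration that produces the characteristic double-exponential (and hence the fractional exponent $\exp(-C_Ft)$). A secondary technical point is making the Hölder split clean enough to isolate the exact power $I_\epsilon^{1/(2q)}$ while keeping every constant dependent only on $C_F$, $T$, $p$, $q$ and not on $\epsilon$; this uniformity is what allows $F=u^\epsilon$ to be substituted later without losing control. Both difficulties are exactly the generalizations of \cite[Thm.~2.1]{MR727203} from the Lipschitz to the log-Lipschitz setting flagged in the text, so the estimates already recorded in \eqref{I2bound} should make the integration routine once the differential inequality is in hand.
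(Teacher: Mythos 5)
Your proposal is correct and follows essentially the same route as the paper: compare $\dot M_\epsilon$ from (\ref{Mbound}) with $\dot z^\epsilon=F(z^\epsilon,t)$, pass through the intermediate point $M_\epsilon(t)$, extract the factor $I_\epsilon(s)^{1/(2q)}$ via the bound $\phi(t)\leq pt^{1-1/p}$ and H\"older, control $I_\epsilon$ by (\ref{I2bound}), and close with the log-Lipschitz Osgood/Gronwall lemma (the paper's Lemma \ref{gwlemma}). The only differences are cosmetic: you work with the differential inequality (using subadditivity of the concave modulus $\phi$) where the paper works directly with the integrated inequality (\ref{diffMbound}), and your stray factor $T$ in the source term of the differential inequality should appear only after integration in time.
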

The bound (\ref{momentconvest}) is recorded because it will be used to show how $h^\epsilon\otimes h^\epsilon\rightarrow h\otimes h$ in the final section.
\begin{proof}
This proof is argued essentially as Theorem 2.1 in \cite{MR727203} but using the weaker assumption on $F$.  Assumption (\ref{diracIC}) implies $M_\epsilon(0)\rightarrow z_0$ and $I_\epsilon(0)\rightarrow 0$ so we may choose $\epsilon$ small enough that $I_\epsilon(0)\leq \exp(-\exp C_FT)$ and (\ref{I2bound}) holds.  We begin with a consequence of (\ref{Mbound}):
\begin{align}
|z^\epsilon(t)-M_\epsilon(t)|\leq |z_0&-M_\epsilon(0)|+\int_0^t|F(z^\epsilon(s),s)-F(M_\epsilon(s),s)|\,ds \label{diffMbound}\\
&\ \ \ \ \ +\int_0^t|F(M_\epsilon(s),s)-\epsilon^{-2}\int_{\Lambda^\epsilon_s}F(x,s)\,dx|\,ds.\notag
\end{align}
To prove the theorem we estimate the two integrals on the right hand side and use a Gronwall argument to achieve the desired bound.

The following estimate is used to bound the second integral on the right hand side of (\ref{diffMbound}), it holds for any $p\in(1,\infty)$:
\begin{align}
\phi(t)\leq pt^{1-\frac{1}{p}}.\label{alb}
\end{align}
Now,
\begin{align}
F(M_\epsilon(s),s)=\epsilon^{-2}\int_{\Lambda^\epsilon_s}F(M_\epsilon(s),s)\,dx,\notag
\end{align}
so that the log-Lipschitz continuity along with (\ref{alb}) (let $p\in(1,\infty)$ be arbitrary then set $\frac{1}{p}+\frac{1}{q}=1$) and H\"older's inequality imply
\begin{align}
|F(M_\epsilon(s),s)-\epsilon^{-2}\int_{\Lambda^\epsilon_s}F(x,s)\,dx|&\leq \epsilon^{-2}C_F\int_{\Lambda^\epsilon_s}\phi(M_\epsilon(s)-x)\,dx\notag\\
&\leq C_F\epsilon^{-2}p\int_{\Lambda^\epsilon_s}(M_\epsilon(s)-x)^{1-\frac{1}{p}}\,dx\notag\\
&\leq C_F\epsilon^{\frac{1}{q}-2}p|\Lambda^\epsilon_s|^{\frac{1+p}{2p}}I_\epsilon(s)^{\frac{1}{2q}}.\notag
\end{align}
The external vector field $F$ is divergence free so $|\Lambda^\epsilon_s|=|\Lambda^\epsilon|=\epsilon^2$ and
\begin{align}
|F(M_\epsilon(s),s)-\epsilon^{-2}\int_{\Lambda^\epsilon_s}F(x,s)|\leq pC_FI_\epsilon(s)^{\frac{1}{2q}}.\notag
\end{align}
Combining this bound with (\ref{I2bound}) allows
\begin{align}
\int_0^t|F(M_\epsilon(s),s)-\epsilon^{-2}\int_{\Lambda^\epsilon_s}F(x,s)|\,ds \leq pTC_F(eI_\epsilon(0)^{\exp(-C_Ft)})^{\frac{1}{2q}} && \forall t\in[0,T].\notag
\end{align}
The first integral on the right hand side of (\ref{diffMbound}) is estimated using log-Lipschitz continuity:
\begin{align}
\int_0^t|F(z^\epsilon(s),s)-F(M_\epsilon(s),s)|\,ds \leq \int_0^tC_F\phi(z^\epsilon(s)-M_\epsilon(s))\,ds.\notag
\end{align}
With these two estimates in hand (\ref{diffMbound}) becomes
\begin{align}
|z^\epsilon(t)-M_\epsilon(t)|\leq |z_0&-M_\epsilon(0)|+pTC_F(eI_\epsilon(0)^{\exp(-C_Ft)})^{\frac{1}{2q}}\label{usegronwall}\\
 &\ \ \ \ \ +\int_0^tC_F\phi(z^\epsilon(s)-M_\epsilon(s))\,ds.\notag
\end{align}
To handle this we use a generalized Gronwall inequality which we prove after the conclusion of this proof.

\begin{lemma}\label{gwlemma}
Let $C_0,C_1>0$ satisfy $C_0\leq\exp(-\exp C_1T)$.  If $f:[0,T]\rightarrow \mathbb{R}^+$ satisfies the bound
\begin{align}
f(t)\leq C_0+ \int_0^tC_1\phi(f(s))\,ds,\notag
\end{align}
then $f(t)\leq eC_0^{\exp(-C_1t)}$ for all $t\in [0,T]$.
\end{lemma}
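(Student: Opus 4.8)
The plan is to reduce the integral inequality to a scalar ODE comparison and then solve that ODE explicitly. First I would introduce
\[
F(t)=C_0+\int_0^t C_1\phi(f(s))\,ds,
\]
so that $f(t)\le F(t)$, the function $F$ is $C^1$ with $F(0)=C_0$, and $F'(t)=C_1\phi(f(t))$. The modulus $\phi$ of Definition \ref{phi} is continuous and nondecreasing: on $(0,1)$ one computes $\phi'(r)=-\log r>0$, while $\phi\equiv 1$ on $[1,\infty)$. Hence $f\le F$ together with monotonicity of $\phi$ upgrades the identity for $F'$ into the differential inequality $F'(t)\le C_1\phi(F(t))$. I would then compare $F$ with the solution $g$ of the genuine ODE $g'=C_1\phi(g)$, $g(0)=C_0$, with the aim of showing $F\le g$ on $[0,T]$. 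Because $\phi$ is merely log-Lipschitz rather than Lipschitz, the comparison is the main technical point; I would handle it by a standard perturbation argument, comparing $F$ instead with the solution $g_\eta$ started at $C_0+\eta$, ruling out a first crossing time via the inequality $F'\le C_1\phi(F)$ and continuity, and finally letting $\eta\to 0$. (Osgood's condition guarantees uniqueness of $g$, so this is harmless.)

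With $F\le g$ in hand, the remaining work is to analyze $g$. The key preliminary is an a priori bound showing $g(t)<1$ throughout $[0,T]$, so that $\phi(g)=g(1-\log g)$ and the explicit elementary form of the ODE applies. Here is exactly where the hypothesis $C_0\le\exp(-\exp C_1T)$ enters. In the regime $g<1$, separating variables and substituting $u=1-\log g$ turns the ODE into $u'=-C_1 u$, giving $u(t)=(1-\log C_0)e^{-C_1 t}$, that is
\[
\log g(t)=1-(1-\log C_0)e^{-C_1 t}.
\]
To make this non-circular I would bootstrap: since $g(0)=C_0<1$, if $t_0\le T$ were the first time $g(t_0)=1$ then the displayed formula holds on $[0,t_0]$ by continuity, forcing $e^{C_1 t_0}=1-\log C_0$. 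But $C_0\le\exp(-\exp C_1 T)$ gives $1-\log C_0\ge 1+e^{C_1 T}>e^{C_1 T}$, hence $t_0>T$, a contradiction; therefore $g<1$ on all of $[0,T]$ and the explicit solution is valid there.

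Finally I would extract the stated bound from the explicit solution. Rewriting,
\[
\log g(t)=1-e^{-C_1 t}+e^{-C_1 t}\log C_0\le 1+e^{-C_1 t}\log C_0,
\]
using only $e^{-C_1 t}\ge 0$, so that $g(t)\le e\,C_0^{\exp(-C_1 t)}$. Chaining $f\le F\le g$ then yields $f(t)\le e\,C_0^{\exp(-C_1 t)}$ for all $t\in[0,T]$, as claimed. I expect the comparison step for the non-Lipschitz nonlinearity, together with the bootstrap confirming $g<1$, to be the only genuinely delicate parts; once those are secured, the explicit integration and the final estimate are routine.
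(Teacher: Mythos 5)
Your proof is correct and takes essentially the same route as the paper: both define the majorant $F(t)=C_0+C_1\int_0^t\phi(f(s))\,ds$, use monotonicity of $\phi$ to get $f\le F$ and $F'\le C_1\phi(F)$ with $F(0)=C_0$, and then solve this differential inequality to reach the bound $eC_0^{\exp(-C_1t)}$, with the hypothesis $C_0\le\exp(-\exp C_1T)$ keeping the solution below $1$ so the explicit form of $\phi$ applies. The only difference is that the paper outsources this last step to \cite[p.~319]{MR1867882}, while you carry it out in full via comparison with the exact ODE solution; your perturbation argument there is correct but avoidable, since one can integrate $F'/\phi(F)\le C_1$ directly (Osgood's device), which needs no Lipschitz property of $\phi$.
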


The first two terms in (\ref{usegronwall}) tend to zero as $\epsilon\rightarrow 0$, so after applying Lemma \ref{gwlemma} we have finished the proof.  We have not used any quantitative property of $F$ aside from the constant $C_F$.\qed
\end{proof}

\begin{proof}(Proof of Lemma \ref{gwlemma})
Since $\phi$ is an increasing function, if $g\geq 0$ satisfies 
\begin{align}
g(t)=C_0+ C_1\int_0^t\phi(f)\,dx\notag
\end{align}
we may conclude $f(t)\leq g(t)$.  Such a $g$ would also satisfy
\begin{align}
g'(t)\leq C_1\phi(g)\ \ \ \ \ \ \ \ \ g(0)=C_0\notag
\end{align}
which can be solved to yield $g(t)\leq eC_0^{\exp(-C_1t)}$.  See for example \cite[p319]{MR1867882}.\qed
\end{proof}

\section{Technical Limit Lemmas}\label{techsec}
To compute the limits involving $h^\epsilon$ we will make use of technical lemmas which describe how the convergence of $\delta^\epsilon$ can be used to imply the convergence of $h^\epsilon$.  
\begin{definition}\label{ueqdef}
We say a sequence of functions $\{f^\alpha(y,s)\}_{\alpha\in A}$ is ``uniformly equicontinuous, uniformly on $[0,T]$'' if given any $\epsilon >0$ there exists $\eta>0$ such that if $|y-\bar{y}|<\eta$ then $|f^{\alpha}(y,s)-f^{\alpha}(\bar{y},s)|<\epsilon$ for all $\alpha\in A$, $y,\bar{y}\in \mathbb{R}^2$ and $s\in [0,T]$.
\end{definition}

To give an idea of how such functions will arise in our analysis, in the next lemma we prove that a reasonable sequence of functions in convolution with the Biot-Savart kernel satisfy the above definition.

\begin{lemma}\label{equlemma}
Let $g^\alpha:\mathbb{R}^+\times \mathbb{R}^2\rightarrow \mathbb{R}$ satisfy  
\begin{align}
\|g^\alpha\|_{L^\infty([0,T];L^p(\mathbb{R}^2))}<C<\infty\notag
\end{align}
for some $p\in (2,\infty)$.  The constant is assumed independent of $\alpha$.  Then each component of $K\ast g^\alpha$ is uniformly equicontinuous, uniformly on $[0,T]$ in the sense of Definition \ref{ueqdef}.
\end{lemma}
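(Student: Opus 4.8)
The plan is to reduce the statement to a single modulus-of-continuity estimate on the Biot--Savart kernel via H\"older's inequality, and then to establish that estimate by a near-field/far-field decomposition. Fix $\alpha$ and $s\in[0,T]$ and write, for $y,\bar{y}\in\mathbb{R}^2$,
\[
(K\ast g^\alpha)(y,s)-(K\ast g^\alpha)(\bar{y},s)=\int_{\mathbb{R}^2}\bigl[K(y-x)-K(\bar{y}-x)\bigr]g^\alpha(x,s)\,dx.
\]
Letting $q=p/(p-1)$ be the conjugate exponent to $p$, the hypothesis $p\in(2,\infty)$ gives $q\in(1,2)$. H\"older's inequality then yields
\[
\bigl|(K\ast g^\alpha)(y,s)-(K\ast g^\alpha)(\bar{y},s)\bigr|\leq \|g^\alpha(\cdot,s)\|_{L^p(\mathbb{R}^2)}\,\bigl\|K(y-\cdot)-K(\bar{y}-\cdot)\bigr\|_{L^q(\mathbb{R}^2)}.
\]
Since the first factor is bounded by $C$ uniformly in $\alpha$ and $s$, it suffices to control the second factor by a modulus of continuity in $|y-\bar{y}|$ that is independent of everything else. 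By the translation invariance of the $L^q$ norm, setting $h=y-\bar{y}$ reduces this to estimating $\|K-\tau_h K\|_{L^q(\mathbb{R}^2)}$, where $\tau_h K(z)=K(z-h)$.

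The step I expect to be the main obstacle is precisely this translation estimate, for a structural reason: because $|K(z)|\sim |z|^{-1}$ and $q<2$, the kernel $K$ is \emph{not} globally in $L^q(\mathbb{R}^2)$ (it fails to be integrable at infinity), so one cannot simply invoke the continuity of translation in $L^q$. The point is that the \emph{difference} $K-\tau_h K$ does lie in $L^q$, because it enjoys the improved decay $|K(z)-K(z-h)|\lesssim |h|\,|z|^{-2}$ at infinity coming from the bound $|\nabla K(z)|\leq C|z|^{-2}$. Extracting this cancellation is the crux of the argument.

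To carry it out I would split $\mathbb{R}^2=\{|z|\leq 2|h|\}\cup\{|z|>2|h|\}$. On the far field $|z|>2|h|$ the segment from $z-h$ to $z$ stays away from the origin, so the mean value inequality gives $|K(z)-K(z-h)|\leq C|h|\,|z|^{-2}$; integrating $|z|^{-2q}$ over $|z|>2|h|$ converges because $2q>2$ (i.e.\ $q>1$), contributing $C|h|^{q}\,|h|^{2-2q}=C|h|^{2-q}$ to $\|K-\tau_h K\|_{L^q}^q$. On the near field I would bound $|K(z)-K(z-h)|^q\leq C(|K(z)|^q+|K(z-h)|^q)$ and use the local integrability of $|K|^q$, valid since $q<2$, with each term contributing $C|h|^{2-q}$ after the change of variables absorbing the $|h|$-translation. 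Combining the two regions gives
\[
\|K-\tau_h K\|_{L^q(\mathbb{R}^2)}\leq C\,|h|^{(2-q)/q}.
\]

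Feeding this back through H\"older's inequality produces
\[
\bigl|(K\ast g^\alpha)(y,s)-(K\ast g^\alpha)(\bar{y},s)\bigr|\leq C\,|y-\bar{y}|^{(2-q)/q},
\]
with $C$ depending only on the uniform $L^p$ bound. This is in fact a uniform H\"older estimate with exponent $(2-q)/q>0$, holding simultaneously for every $\alpha$ and every $s\in[0,T]$, and it immediately implies uniform equicontinuity, uniformly on $[0,T]$, in the sense of Definition \ref{ueqdef}. I note that the whole argument hinges on $q\in(1,2)$: the far-field convergence needs $q>1$ and the near-field local integrability needs $q<2$, which is exactly the range forced by the hypothesis $p\in(2,\infty)$.
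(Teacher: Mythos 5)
Your proof is correct, and the core analytic ingredients coincide with the paper's: a decomposition relative to the distance to the singularity, the gradient decay $|\nabla K(z)|\leq C|z|^{-2}$ via the mean value theorem in the far region, local integrability of $|K|^q$ in the near region, and the conjugate exponent range $q\in(1,2)$ forced by $p\in(2,\infty)$ --- both arguments land on the same H\"older exponent $2/q-1=(2-q)/q$. The organization, however, differs. The paper splits the convolution integral into three annuli around $x^1$ (radius $>2$, between $2d$ and $2$, and $<2d$) and applies H\"older's inequality separately on each piece, using the pointwise estimate $|K(x)-K(y)|\leq\frac{1}{\pi}\frac{|x-y|}{|x||y|}$ on the outermost region (which yields an extra linear-in-$d$ bound there) and the mean value theorem on the middle one. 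You instead apply H\"older once, globally, which cleanly isolates a statement about the kernel alone, namely $\|K-\tau_hK\|_{L^q(\mathbb{R}^2)}\leq C|h|^{(2-q)/q}$, proved with a two-region split at $|z|=2|h|$. Your formulation is more modular: the translation estimate is independent of $g^\alpha$ (indeed it is exact, by the $-1$-homogeneity of $K$), and your observation that $K\notin L^q(\mathbb{R}^2)$ globally --- so abstract continuity of translations in $L^q$ cannot be invoked and the cancellation in the difference must be extracted by hand --- correctly identifies the crux. What the paper's three-region variant buys is only the slightly sharper modulus $\min(d,\,d^{2/q-1})$, which is not needed; both proofs deliver the uniform equicontinuity, uniformly on $[0,T]$, claimed in the lemma.
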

\begin{proof}
Let $x^1,x^2\in \mathbb{R}^2$ satisfy $d=|x^1-x^2|<1$.  We will show $|K\ast g^\alpha(x^1,t)-K\ast g^\alpha(x^2,t)|$ is uniformly bounded by some power of $d$.  This actually proves some type of H\"older continuity but we only use the weaker condition.  The proof follows typical arguments to show log-Lipschitz continuity (see for example \cite{MR1867882} Lemma 8.1) and is included here because the exact result is not readily available in the literature.  To begin we split the integral into three parts:
\begin{align}
|K\ast g^\alpha(x^1,t)-K\ast g^\alpha(x^2,t)| &\leq \int_{\mathbb{R}^2\setminus B_2(x^1)}+\int_{B_2(x^1)\setminus B_{2d}(x^1)}+\int_{B_{2d}(x^1)}\notag\\ 
&\ \ \ \ \ \ \ \ \ \ \times |K(x^1-y)-K(x^2-y)||g^\alpha(y,t)|\,dy\notag\\
&\leq A_1+A_2+A_3.\notag
\end{align}

To bound $A_1$ we use the estimate (\cite[p. 317]{MR1867882})
\begin{align}
|K(x)-K(y)|\leq \frac{1}{\pi}\frac{|x-y|}{|x||y|}.\notag
\end{align}
H\"older's inequality with $\frac{1}{p}+\frac{1}{q}=1$ implies
\begin{align}
A_1\leq  \frac{d}{\pi} \int_{\mathbb{R}^2\setminus B_2(x^1)}\frac{g^\alpha(y)}{|x^1-y||x^2-y|}\,dy \leq\frac{d}{\pi}\|g^\alpha\|_{L^p(\mathbb{R}^2)}\left(\int_1^\infty\frac{\,dr}{r^{2q-1}}\right)^{\frac{1}{q}}.\notag
\end{align}
The integral on the right hand side is finite when $q\in (1,\infty)$ and in turn $p\in (1,\infty)$.  This implies
\begin{align}
A_1\leq cd\|g^\alpha\|_{L^p(\mathbb{R}^2)}.\label{A1bound}
\end{align}

For $y\in B_2(x^1)\setminus B_{2d}(x^1)$ the mean-value theorem implies (\cite[p. 318]{MR1867882})
\begin{align}
|K(x^1-y)-K(x^2-y)|\leq c\frac{|x^1-x^2|}{|x^1-y|^2}.\notag
\end{align}
Again using H\"older's inequality,
\begin{align}
A_2\leq cd\|g^\alpha\|_{L^p(\mathbb{R}^2)}\left(\int_{2d}^2\frac{\,dr}{r^{2q-1}}\right)^{\frac{1}{q}}.\notag
\end{align}
The integral on the right hand side is finite when $q\in (1,\infty)$ but we require $q\in (1,2)$ (and hence $p\in (2,\infty)$) to retain positive powers of $d$ in our estimate and establish continuity.  Indeed,
\begin{align}
A_2\leq c_pd^{\frac{2}{q}-1}\|g^\alpha\|_{L^p(\mathbb{R}^2)}.\label{A2bound}
\end{align}

We bound the third term using H\"older's inequality and the estimate $|K(x)|\leq C|x|^{-1}$.  So,
\begin{align}
&\left(\int_{B_{2d}(x^1)}|K(x^1-y)-K(x^2-y)|^q\right)^{\frac{1}{q}} \notag\\
&\ \ \ \ \ \ \ \ \ \ \ \leq \left(\int_{B_{2d}(x^1)}\frac{\,dy}{|x^1-y|^{q}}\right)^{\frac{1}{q}}+\left(\int_{B_{2d}(x^1)}\frac{\,dy}{|x^2-y|^{q}}\right)^{\frac{1}{q}}\notag\\
&\ \ \ \ \ \ \ \ \ \ \ \leq 2\left(\int_0^{2d}\frac{\,dr}{r^{q-1}}\right)^{\frac{1}{q}}.\notag
\end{align}
This integral is finite if $q\in (1,2)$, so if $p\in (2,\infty)$ we can bound
\begin{align}
A_3\leq c_pd^{\frac{2}{q}-1}\|g^\alpha\|_{L^p(\mathbb{R}^2)}.\label{A3bound}
\end{align}

Considering (\ref{A1bound}-\ref{A3bound}) we see $g^\alpha\ast K$ satisfies the conclusion of this lemma.\qed
\end{proof}

When we do not need to deal with a sequence of functions we have the following corollary and definition.

\begin{definition}\label{ucudef}
We say a function $f(x,t)$ is ``uniformly continuous, uniformly on $[0,T]$'' if given any $\epsilon >0$ there exists $\eta>0$ such that if $|y-\bar{y}|<\eta$ then $|f(y,s)-f(\bar{y},s)|<\epsilon$ for all $y,\bar{y}\in \mathbb{R}^2$ and $s\in [0,T]$.
\end{definition}

\begin{corollary}\label{equcor}
If $g\in L^\infty([0,T];L^p(\mathbb{R}^2))$ then $g\ast K$ is uniformly continuous, uniformly on $[0,T]$.
\end{corollary}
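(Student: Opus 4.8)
The plan is to obtain Corollary \ref{equcor} as the one-function specialization of Lemma \ref{equlemma}, so that no new estimates are needed. First I would take the index set $A$ appearing in Lemma \ref{equlemma} to be a singleton and set $g^\alpha := g$ for the single index. The hypothesis of the corollary, namely $g \in L^\infty([0,T];L^p(\mathbb{R}^2))$ (read with $p \in (2,\infty)$), is precisely the uniform bound $\|g^\alpha\|_{L^\infty([0,T];L^p(\mathbb{R}^2))} < C$ demanded by the lemma; uniformity of this bound in $\alpha$ is automatic because there is only one index.

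Next I would simply invoke the conclusion of Lemma \ref{equlemma}: each component of $K \ast g$ is uniformly equicontinuous, uniformly on $[0,T]$ in the sense of Definition \ref{ueqdef}. The final step is to match definitions: when the family $\{f^\alpha\}_{\alpha \in A}$ consists of a single function, the ``for all $\alpha \in A$'' clause in Definition \ref{ueqdef} is vacuous, so that statement collapses verbatim to the assertion that $K \ast g$ is uniformly continuous, uniformly on $[0,T]$ in the sense of Definition \ref{ucudef}. This yields the corollary.

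The only point requiring any attention — and it is not really an obstacle — is the admissible range of $p$: the bounds (\ref{A2bound}) and (\ref{A3bound}) used inside Lemma \ref{equlemma} require $q \in (1,2)$, hence $p \in (2,\infty)$, in order to retain a positive power of $d$ and thereby deliver genuine continuity rather than a divergent estimate. I would therefore state (or at least read) the corollary with $p \in (2,\infty)$, consistent with the hypothesis of the lemma it specializes. Since all the analytic work has already been carried out in Lemma \ref{equlemma}, the proof of the corollary is essentially a remark that a uniform statement over a one-element family is the same as the corresponding non-uniform statement for a single function.
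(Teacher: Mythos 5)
Your proposal is correct and is exactly the paper's proof: the paper simply takes $g^\alpha = g$ for all $\alpha$ in Lemma \ref{equlemma}, as you do. Your side remark about reading the corollary with $p\in(2,\infty)$ (inherited from the hypothesis of Lemma \ref{equlemma}) is a fair clarification of the statement but does not change the argument.
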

\begin{proof}
Choose $g^\alpha=g$ for all $\alpha$ in Lemma \ref{equlemma}.\qed
\end{proof}

Now we state the main technical device for proving convergence.
\begin{lemma}\label{bigguns}
Let $f^\alpha(y,s)$ be a sequence of functions, uniformly equicontinuous, uniformly on $[0,T]$, satisfying the bound
\begin{align}
\sup_{s\in [0,T]}\|f^\alpha(\cdot,s)\|_{L^\infty(\mathbb{R}^2)}\leq C\label{eqlass4}
\end{align}
Let $\delta^\alpha(x,s)$ be an approximation to the Dirac distribution which satisfies the following properties:
\begin{align}
\lim_{\alpha\rightarrow 0}\sup_{s\in [0,T]}\left|M_\alpha(s)-z(s)\right|&=\lim_{\alpha\rightarrow 0}\sup_{s\in [0,T]}\left|\int_{\mathbb{R}^2}y\delta^\alpha(y,s)\,dy-z(s)\right| = 0\label{eqlass1}\\
\lim_{\alpha\rightarrow 0}\sup_{s\in [0,T]}I_\alpha(t)=\lim_{\alpha\rightarrow 0}&\sup_{s\in [0,T]}\left(\int_{\mathbb{R}^2}(y-M_\alpha(s))^2\delta^\alpha(y,s)\,dx\right)=0\label{eqlass2}\\
\int_{\mathbb{R}^2}\delta^\alpha(x,s)\,dx &=1 \ \ \ \ and \ \ \ \ \delta^\alpha(x,s)\geq0.\label{eqlass3}
\end{align}
Then,
\begin{align}
\lim_{\alpha\rightarrow 0}\sup_{s\in [0,T]}\left|f^\alpha(z(s),s)-\int_{\mathbb{R}^2}f^\alpha(y,s)\delta^\alpha(y,s)\,dy\right|=0.\notag
\end{align}
\end{lemma}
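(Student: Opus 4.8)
The plan is to exploit that $\delta^\alpha(\cdot,s)$ is a probability density, so the quantity in question is an average of differences of $f^\alpha$ against $\delta^\alpha$. Using $\int\delta^\alpha\,dy=1$ from (\ref{eqlass3}), I would record the exact identity
\begin{align}
f^\alpha(z(s),s)-\int_{\mathbb{R}^2}f^\alpha(y,s)\delta^\alpha(y,s)\,dy=\int_{\mathbb{R}^2}\bigl(f^\alpha(z(s),s)-f^\alpha(y,s)\bigr)\delta^\alpha(y,s)\,dy,
\end{align}
and then insert the first moment $M_\alpha(s)$ as an intermediate point, splitting the integrand as $[f^\alpha(z(s),s)-f^\alpha(M_\alpha(s),s)]+[f^\alpha(M_\alpha(s),s)-f^\alpha(y,s)]$. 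The first bracket does not depend on $y$, so after integrating against $\delta^\alpha$ it contributes exactly $f^\alpha(z(s),s)-f^\alpha(M_\alpha(s),s)$; the second bracket I would control by splitting the domain of integration into a ball about $M_\alpha(s)$ and its complement.

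Fix $\epsilon>0$. By uniform equicontinuity (Definition \ref{ueqdef}) choose $\eta>0$ so that $|y-\bar y|<\eta$ forces $|f^\alpha(y,s)-f^\alpha(\bar y,s)|<\epsilon$ for every $\alpha$ and every $s\in[0,T]$, and fix a radius $r\in(0,\eta)$. On $B_r(M_\alpha(s))$ the second bracket is $<\epsilon$ pointwise, so its integral against $\delta^\alpha$ is at most $\epsilon$ (again using $\int\delta^\alpha=1$). On the complement I would use the uniform bound (\ref{eqlass4}) to estimate the integrand by $2C$ and then control the $\delta^\alpha$-mass of the tail by Chebyshev's inequality: since $\delta^\alpha\ge0$,
\begin{align}
\int_{\mathbb{R}^2\setminus B_r(M_\alpha(s))}\delta^\alpha(y,s)\,dy\le\frac{1}{r^2}\int_{\mathbb{R}^2}(y-M_\alpha(s))^2\delta^\alpha(y,s)\,dy=\frac{I_\alpha(s)}{r^2}.
\end{align}
Thus the second-bracket contribution is at most $\epsilon+2C\,r^{-2}\sup_{s}I_\alpha(s)$.

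It remains to handle the first bracket. By (\ref{eqlass1}) we have $\sup_s|z(s)-M_\alpha(s)|\to0$, so for $\alpha$ small this is $<\eta$ and equicontinuity gives $|f^\alpha(z(s),s)-f^\alpha(M_\alpha(s),s)|<\epsilon$ uniformly in $s$. Combining the three estimates, for all sufficiently small $\alpha$ the supremum over $s$ of the target quantity is at most $2\epsilon+2C\,r^{-2}\sup_s I_\alpha(s)$; since $r$ is now fixed and $\sup_s I_\alpha(s)\to0$ by (\ref{eqlass2}), the last term is $<\epsilon$ for $\alpha$ small, giving a bound of $3\epsilon$. As $\epsilon$ was arbitrary the limit is zero.

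The delicate point, and the reason the hypotheses are stated as they are, is the order of quantifiers: the radius $r$ must be chosen from the (uniform in $\alpha$ and $s$) modulus of continuity \emph{before} sending $\alpha\to0$, so that the Chebyshev tail $2C\,r^{-2}I_\alpha(s)$ can be driven to zero with $r$ held fixed. Uniformity in $s$ throughout is what upgrades the pointwise-in-$s$ estimates to the stated supremum, and the nonnegativity and unit mass of $\delta^\alpha$ are exactly what make the averaging and the Chebyshev step legitimate.
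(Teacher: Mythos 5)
Your proposal is correct and follows essentially the same argument as the paper: insert $M_\alpha(s)$ as an intermediate point, control $f^\alpha(z(s),s)-f^\alpha(M_\alpha(s),s)$ via the first-moment convergence (\ref{eqlass1}) together with uniform equicontinuity, and control the averaged difference $f^\alpha(M_\alpha(s),s)-\int f^\alpha\delta^\alpha\,dy$ by splitting into a ball of radius determined by the modulus of continuity (inside: equicontinuity and unit mass; outside: the $L^\infty$ bound (\ref{eqlass4}) plus a Chebyshev estimate against the second moment $I_\alpha$). The only cosmetic differences are your use of a radius $r<\eta$ in place of $\eta$ itself and the explicit factor $2C$ in the tail bound, neither of which changes the structure or validity of the argument.
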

\begin{proof}
Let $\epsilon>0$ be fixed and let $\eta>0$ be given by Definition (\ref{ueqdef}).  Using (\ref{eqlass1}) we may choose $\alpha_1>0$ small enough so that if $\alpha<\alpha_1$,
\begin{align}
\sup_{s\in [0,T]} |M_\alpha(s)-z(s)| <\eta.\notag
\end{align}
Now,
\begin{align}
|f^\alpha&(z(s),s)-\int_{\mathbb{R}^2}f^\alpha(y,s)\delta^\alpha(y,s)\,dy|\label{eps1}\\
&\leq |f^\alpha(z(x),s) -f^\alpha(M_\alpha(s),s)| +|f^\alpha(M_\alpha(s),s) - \int_{\mathbb{R}^2}f^\alpha(y,s)\delta^\alpha(y,s)\,dy|.\notag
\end{align}
If $\alpha<\alpha_1$, the first term on the right hand side is bounded uniformly in $s$ by $\epsilon$.  To attack the second term, first note that (\ref{eqlass3}) implies
\begin{align}
f^\alpha(M_\alpha(s),s) &- \int_{\mathbb{R}^2}f^\alpha(y,s)\delta^\alpha(y,s)\,dy\notag\\
&=\int_{\mathbb{R}^2}[f^\alpha(y,s)-f^\alpha(M_\alpha(s),s)]\delta^\alpha(y,s)\,dy.\notag
\end{align}
We may then decompose
\begin{align}
\int_{\mathbb{R}^2}[f^\alpha(y,s)-&f^\alpha(M_\alpha(s),s)]\delta^\alpha(y,s)\,dy\notag\\
&= \int_{B_\eta(M_\alpha(s))}[f^\alpha(y,s)-f^\alpha(M_\alpha(s),s)]\delta^\alpha(y,s)\,dy\notag\\
&\ \ \ \ \ \ +\int_{B^C_\eta(M_\alpha(s))}[f^\alpha(y,s)-f^\alpha(M_\alpha(s),s)]\delta^\alpha(y,s)\,dy.\notag
\end{align}
Relying on the uniform equicontinuity and using again (\ref{eqlass3}) we see
\begin{align}
|\int_{B_\eta(M_\alpha(s))}[f^\alpha(y,s)-f^\alpha&(M_\alpha(s),s)]\delta^\alpha(y,s)\,dy|\notag\\
&\leq \int_{B_\eta(M_\alpha(s))}\epsilon\delta^\alpha(y,s)\,dy \leq \epsilon.\label{eps2}
\end{align}
This bound is independent of $s\in [0,T]$.  To handle the other term note
\begin{align}
|\int_{B^C_\eta(M_\alpha(s))}[f^\alpha(y,s)-f^\alpha&(M_\alpha(s),s)]\delta^\alpha(y,s)\,dy|\notag\\
&\leq \frac{C}{\eta^2}\int_{\mathbb{R}^2}|y-M_\alpha(s)|^2\delta^\alpha(y,s)\,dy\notag\\
&\leq\frac{C}{\eta^2}I_\alpha(s).\notag
\end{align}
Above, $C$ is again as in (\ref{eqlass4}).  In view of (\ref{eqlass2}), there exists an $\alpha_\epsilon\leq \alpha_1$ such that for all $\alpha\leq \alpha_\epsilon$, $I_\alpha(s)\leq \frac{\eta^2\epsilon}{C}$, and
\begin{align}
|\int_{B^C_\eta(M_\alpha(s))}[f^\alpha(y,s)-f^\alpha(M_\alpha(s),s)]\delta^\alpha(y,s)\,dy|\leq \epsilon.\label{eps3}
\end{align}
This bound is independent of $s\in [0,T]$.  Combining $(\ref{eps1}-\ref{eps3})$ shows that if $\alpha<\alpha_\epsilon$,
\begin{align}
|f^\alpha(z(s),s)-\int_{\mathbb{R}^2}f^\alpha(y,s)\delta^\alpha(y,s)\,dy| <3\epsilon.\notag
\end{align}
As $\epsilon$ was picked arbitrarily, the lemma is proved.\qed
\end{proof}

\begin{corollary}\label{biggunscor}
Let $\delta^\alpha$ satisfy the assumptions of Lemma \ref{bigguns}.  If $f:\mathbb{R}^+\times\mathbb{R}^2\rightarrow \mathbb{R}$ is uniformly continuous, uniformly on $[0,T]$ and $f\in L^\infty([0,T]\times\mathbb{R}^2)$ then
\begin{align}
\lim_{\alpha\rightarrow 0}\sup_{s\in [0,T]}\left|f(z(s),s)-\int_{\mathbb{R}^2}f(y,s)\delta^\alpha(y,s)\,dy\right| =0.\notag
\end{align}
\end{corollary}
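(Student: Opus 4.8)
The plan is to deduce this directly from Lemma \ref{bigguns} by applying that lemma to the constant sequence $f^\alpha = f$ for every index $\alpha$, exactly in the spirit of how Corollary \ref{equcor} was obtained from Lemma \ref{equlemma}. Since the hypotheses imposed on $\delta^\alpha$ here are assumed to be identical to those of Lemma \ref{bigguns}, the only work is to check that this constant choice of $f^\alpha$ satisfies the two remaining conditions required by that lemma: the uniform bound (\ref{eqlass4}) and the uniform equicontinuity in the sense of Definition \ref{ueqdef}.

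First I would verify (\ref{eqlass4}). Because $f\in L^\infty([0,T]\times\mathbb{R}^2)$, we have $\sup_{s\in[0,T]}\|f(\cdot,s)\|_{L^\infty(\mathbb{R}^2)}\leq \|f\|_{L^\infty([0,T]\times\mathbb{R}^2)}<\infty$, and with $f^\alpha = f$ this bound is trivially independent of $\alpha$. Next I would confirm the uniform equicontinuity. Comparing Definition \ref{ucudef} with Definition \ref{ueqdef}, the equicontinuity requirement for a constant sequence $\{f^\alpha\}_\alpha=\{f\}$ collapses exactly to the statement that $f$ is uniformly continuous, uniformly on $[0,T]$: given $\epsilon>0$, the modulus $\eta>0$ furnished by Definition \ref{ucudef} serves simultaneously for all indices $\alpha$ precisely because every $f^\alpha$ is the same function $f$. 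Hence the constant sequence is uniformly equicontinuous, uniformly on $[0,T]$.

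With both hypotheses in place, Lemma \ref{bigguns} applies to the sequence $f^\alpha=f$ and yields $\lim_{\alpha\to 0}\sup_{s\in[0,T]}\left|f^\alpha(z(s),s)-\int_{\mathbb{R}^2}f^\alpha(y,s)\delta^\alpha(y,s)\,dy\right|=0$, which upon substituting $f^\alpha=f$ is exactly the assertion of the corollary. I expect no genuine obstacle here, since the analytic content is entirely carried by Lemma \ref{bigguns}; the sole point meriting a sentence of care is the (immediate) observation that uniform continuity uniformly on $[0,T]$ of a single function is the same as uniform equicontinuity uniformly on $[0,T]$ of the associated constant sequence.
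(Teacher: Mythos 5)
Your proof is correct and matches the paper's own argument exactly: the paper proves this corollary in one line by choosing $f^\alpha = f$ for all $\alpha$ in Lemma \ref{bigguns}, precisely as you do. Your explicit verification that the constant sequence satisfies the bound (\ref{eqlass4}) and the equicontinuity condition of Definition \ref{ueqdef} is the (immediate) content the paper leaves implicit.
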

\begin{proof}
Choose $f^\alpha=f$ for all $\alpha$ in Lemma \ref{bigguns}.\qed
\end{proof}

The final point to address here is the assumption (\ref{eqlass4}) in Lemma \ref{bigguns}.  To show how this will be satisfied we present a well known estimate.
\begin{lemma}\label{inftylemma}
Let $g\in L^p\cap L^q(\mathbb{R}^2)$ where $1\leq p< 2<q\leq \infty$.  Then
\begin{align}
\|g\ast K\|_{L^\infty(\mathbb{R}^2)}\leq C(\|g\|_{L^p(\mathbb{R}^2)}+\|g\|_{L^q(\mathbb{R}^2)}).\notag
\end{align}
\end{lemma}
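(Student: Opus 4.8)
The plan is to exploit the pointwise bound $|K(x)| \le \tfrac{1}{2\pi|x|}$ together with a near/far decomposition of the convolution integral, matching each region of the kernel against the copy of $g$ whose integrability compensates for the behaviour of $K$ there. Fix an arbitrary $x \in \mathbb{R}^2$ and split
\begin{align}
|(g\ast K)(x)| \le \int_{B_1(x)}|K(x-y)||g(y)|\,dy + \int_{\mathbb{R}^2\setminus B_1(x)}|K(x-y)||g(y)|\,dy.\notag
\end{align}
The role of the hypothesis $1 \le p < 2 < q \le \infty$ is precisely that it places the two conjugate exponents on opposite sides of $2$, which is the threshold governing the local and global integrability of $1/|x|$ in the plane.

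For the near piece I would pair the kernel with $g \in L^q$. Writing $q'$ for the conjugate exponent, the condition $q > 2$ gives $q' < 2$, so $|K| \in L^{q'}(B_1)$ since $\int_{B_1}|y|^{-q'}\,dy = 2\pi\int_0^1 r^{1-q'}\,dr < \infty$ (the endpoint $q = \infty$ gives $q' = 1$ and $\int_0^1 r\cdot r^{-1}\,dr = 1$). H\"older's inequality then yields
\begin{align}
\int_{B_1(x)}|K(x-y)||g(y)|\,dy \le \|K\|_{L^{q'}(B_1)}\,\|g\|_{L^q(\mathbb{R}^2)}.\notag
\end{align}
For the far piece I would instead pair with $g \in L^p$. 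With $p'$ the conjugate exponent, $p < 2$ gives $p' > 2$, so the decay of $K$ makes $|K| \in L^{p'}(\mathbb{R}^2 \setminus B_1)$ because $\int_{\mathbb{R}^2 \setminus B_1}|y|^{-p'}\,dy = 2\pi\int_1^\infty r^{1-p'}\,dr < \infty$ (the endpoint $p = 1$ gives $p' = \infty$, with $|K| \le \tfrac{1}{2\pi}$ on $B_1^c$). H\"older again gives
\begin{align}
\int_{\mathbb{R}^2\setminus B_1(x)}|K(x-y)||g(y)|\,dy \le \|K\|_{L^{p'}(\mathbb{R}^2\setminus B_1)}\,\|g\|_{L^p(\mathbb{R}^2)}.\notag
\end{align}

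Setting $C = \max\{\|K\|_{L^{q'}(B_1)},\,\|K\|_{L^{p'}(\mathbb{R}^2\setminus B_1)}\}$, which depends only on $p$ and $q$, produces the claimed estimate; since the resulting bound is independent of the chosen $x$, taking the supremum over $x \in \mathbb{R}^2$ finishes the argument. There is no genuine obstacle here: the only point requiring care is that the splitting radius $1$ correctly isolates the regime where $K$ fails to be square-integrable near the origin (absorbed by the $L^q$ factor, $q>2$) from the regime where its tail fails to be square-integrable at infinity (absorbed by the $L^p$ factor, $p<2$), and that the endpoint exponents $p=1$ and $q=\infty$ are covered respectively by the global bound $|K|\le\tfrac{1}{2\pi}$ on $B_1^c$ and the finiteness of $\int_{B_1}|y|^{-1}\,dy$.
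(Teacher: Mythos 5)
Your proof is correct and is essentially the paper's own argument: the same near/far splitting of the convolution at radius $1$, pairing the singular part of $K$ against $g\in L^q$ and the tail against $g\in L^p$ via H\"older, with the thresholds $q'<2$ and $p'>2$ playing exactly the same role. The only (minor) difference is that you verify the endpoint cases $p=1$ and $q=\infty$ directly, whereas the paper defers those to the literature.
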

\begin{proof}
The case $p=1$ and $q=\infty$ is usually what is presented in the literature so we do not give a proof of those cases (see for example \cite[Prop. 8.2]{MR1867882}).  Let $B_1$ be a ball of unit radius centered at the origin.  Using the estimate $|K(x)|\leq C|x|^{-1}$ we bound, 
\begin{align}
&\|g\ast K\|_{L^\infty(\mathbb{R}^2)} \leq \int_{B_1}|g(y)K(x-y)|\,dy + \int_{B^c_1}|g(y)K(x-y)|\,dy\notag\\
&\ \ \ \ \ \leq C\|g\|_{L^q(\mathbb{R}^2)}\left( \int_{B_1}|x|^{-q^*}\,dy\right)^{\frac{1}{q^*}}+C\|g\|_{L^p(\mathbb{R}^2)}\left( \int_{B^c_1}|x|^{-p^*}\,dy\right)^{\frac{1}{p^*}}\notag
\end{align}
Here $q^*$ satisfies the usual relation $\frac{1}{q}+\frac{1}{q^*}=1$ and $p^*$ is the same with respect to $p$.  The first integral is finite if $q^*<2$ or $q>2$ and likewise the second integral is finite if $p^*>2$ or $p<2$.
\end{proof}

\section{Existence of the Weak Solution}\label{existence}
This section contains the proof of Theorem \ref{maintheorem}.
\subsection{Moment Properties of the Approximate Sequence}\label{seqprop}

Let $\delta^\epsilon_0$ be a sequence of vortex blobs satisfying assumptions (\ref{diracIC}) and (\ref{diracstructre}).  Let $\omega_0\in L^1\cap L^\infty(\mathbb{R}^2)$ be the given initial ambient vorticity and construct $\omega^\epsilon$, $\delta^\epsilon$, $u^\epsilon$, and $h^\epsilon$ as in Subsection \ref{decomposition}.  Lemma \ref{weakapproxlemma} shows these functions are approximate solutions of the Vortex-Wave equation.  Applying Lemmas \ref{uexistence} and \ref{pathlemma} we find a subsequence $\epsilon\rightarrow 0$ along with a function $u(x,t)$ and a path $z(t)$ which satisfy all of the conclusions of the theorem with exception of the fact that $u$ and $z$ form a solution of the Vortex-Wave equation.  We only need to take the weak limits to show (\ref{epsweak}) converges to (\ref{weak}) to finish the proof. These limits are demonstrated in the follow subsection using the technical lemmas established in Section \ref{techsec}.

From here on we are considering the specific subsequence of $\epsilon$ given by Lemma \ref{uexistence}.
We record some consequences of Section \ref{momentsection}.   Recall $M_\epsilon$ and $I_\epsilon$ are defined as in (\ref{moment1}-\ref{moment2}) and $z^\epsilon(t)$ is the path under the flow $u^\epsilon$ starting at $z_0$, it is defined by (\ref{zdefn}).

Property (iv) in Lemma \ref{weakapproxlemma}, along with a well known property of the Biot-Savart kernel (see \cite[p. 315]{MR1867882}) implies the sequence $u^\epsilon$ is uniformly log-Lipschitz on $[0,T]$.  That is, $\forall s\in [0,T]$,
\begin{align}
|u^\epsilon(x,s)-u^\epsilon(y,s)|&\leq C_{\omega_0}\phi(x-y) \notag\\
C_{\omega_0}=\|\omega^\epsilon(s)\|_{L^\infty\cap L^1(\mathbb{R}^2)}&\leq \|\omega_0\|_{L^\infty\cap L^1(\mathbb{R}^2)}.\notag
\end{align}
By choosing $F=u^\epsilon$ and applying Theorem \ref{momconvthrm} (while keeping this uniform estimate in mind) we see
\begin{align}
\lim_{\epsilon\rightarrow 0}\sup_{t\in [0,T]}|M_\epsilon(t)- z^\epsilon(t)|=0.\label{Mepsconv}
\end{align}
In fact this statement follows directly from (\ref{momentconvest}).  If $I_\epsilon(0)\leq \exp(-\exp C_{\omega_0}T)$, (\ref{I2bound}) implies
\begin{align}
I_\epsilon(t)\leq eI_\epsilon(0)^{\exp(-C_{\omega_0}t)}\ \ \ \ \ \ \ \ \forall t\in[0,T].\label{uI2bound}
\end{align}

Note (\ref{Mepsconv}), (\ref{uI2bound}), and Lemma \ref{pathlemma} imply the sequence $\delta^\epsilon$ satisfies the assumptions of Lemma \ref{bigguns}.  

\subsection{Weak Limit of Approximate Sequence} \label{mainproof}

Keep in mind the remark at the end of the previous subsection: the approximate blobs $\delta^\epsilon$ satisfy the assumptions of Lemma \ref{bigguns} and $z(t)$ is a continuous path in $\mathbb{R}^2$.  Let $\Phi\in C_\sigma^\infty (\mathbb{R}^2\times [0,T]\setminus \{(z(t), t)\}_{t\in [0,T]})$.  We now show how (\ref{epsweak}) converges to (\ref{weak}) with $h(x,t)=K(x-z(t))$.

The first two limits we consider are well known for the Euler equation.  Since our velocity sequence $u^\epsilon$ corresponds to vorticity uniformly in $L^1\cap L^\infty$ the following claims are identical to those in the literature.

{\bf Claim:}
\begin{align}
\int_0^T\int_{\mathbb{R}^2}\Phi_t u^\epsilon\,dx\,dt \rightarrow \int_0^T\int_{\mathbb{R}^2}\Phi_t u\,dx\,dt\label{claim1}
\end{align}
This claim follows immediately from the fact that $\Phi_t$ has compact support and (\ref{l1convergence}).  

{\bf Claim:}
\begin{align}
\int_0^T\int_{\mathbb{R}^2}\nabla \Phi:u^\epsilon\otimes u^\epsilon\,dx\,dt \rightarrow \int_0^T\int_{\mathbb{R}^2}\nabla \Phi:u\otimes u\,dx\,dt\label{claim2}
\end{align}
This is observed by adding and subtracting the cross terms, then applying (\ref{l2convergence}).  See \cite[p. 402]{MR1867882} for more details.

Now we work on the terms which involve $h^\epsilon$ using the lemmas in the previous subsection.

{\bf Claim:}
\begin{align}
\int_0^T\int_{\mathbb{R}^2}\Phi_t h^\epsilon\,dx\,dt \rightarrow \int_0^T\int_{\mathbb{R}^2}\Phi_t h\,dx\,dt.\label{claim3}
\end{align}
The first step in evaluating this limit is to rewrite it using the the Biot-Savart kernel:
\begin{align}
&\int_0^T\int_{\mathbb{R}^2}\Phi_t(x) h^\epsilon(x)\,dx\,dt -\int_0^T\int_{\mathbb{R}^2}\Phi_t(x) h(x)\,dx\,dt\notag\\
&=\int_0^T\int_{\mathbb{R}^2}\Phi_t(x) \int_{\mathbb{R}^2}K(x-y)\delta^\epsilon(y)\,dy\,dx\,dt-\int_0^T\int_{\mathbb{R}^2}\Phi_t(x) K(x-z(t))\,dx\,dt\notag\\
&=\int_0^T\left[\int_{\mathbb{R}^2}\left(\int_{\mathbb{R}^2}\Phi_t(x) K(x-y)\,dx\right)\delta^\epsilon(y)\,dy-\int_{\mathbb{R}^2}\Phi_t(x) K(x-z(t))\,dx\right]\,dt.\notag
\end{align}
The support of $\Phi\in C_\sigma^\infty (\mathbb{R}^2\times [0,T]\setminus \{(z(t), t)\}_{t\in [0,T]})$ and (\ref{transportconservation}) justifies changing the order of integration between the last two lines.  The integrand $\Phi_t(x) K(x-z(t))$ is smooth on its support and the final integral makes sense.  Corollary \ref{equcor} applied to $\Phi_t$ shows $K\ast \Phi_t$ is uniformly continuous, uniformly on $[0,T]$.  $\Phi$ is smooth with compact support so Lemma \ref{inftylemma} implies $K\ast \Phi_t\in L^\infty([0,T]\times\mathbb{R}^2)$.  Corollary \ref{biggunscor} applied to $K\ast \Phi_t$ then shows 
\begin{align}
\int_0^T\left|\int_{\mathbb{R}^2}\left(\int_{\mathbb{R}^2}\Phi_t(x) K(x-y)\,dx\right)\delta^\epsilon(y)\,dy-\int_{\mathbb{R}^2}\Phi_t(x) K(x-z(t))\,dx\right|\,dt\rightarrow 0\notag
\end{align}
and the proof of (\ref{claim3}) is complete.

{\bf Claim:}
\begin{align}
\int_0^T\int_{\mathbb{R}^2}\nabla \Phi:h^\epsilon\otimes u^\epsilon \,dx\,dt&\rightarrow \int_0^T\int_{\mathbb{R}^2}\nabla \Phi:h\otimes u \,dx\,dt.\label{claim4}\\
\int_0^T\int_{\mathbb{R}^2}\nabla \Phi:u^\epsilon\otimes h^\epsilon \,dx\,dt&\rightarrow \int_0^T\int_{\mathbb{R}^2}\nabla \Phi:h\otimes u \,dx\,dt.\label{claim4a}
\end{align}
Since these two are nearly identical we will only prove (\ref{claim4}).  By adding and subtracting the cross term $\iint\nabla \Phi:h\otimes u^\epsilon \,dx\,dt$ we break this case into two parts.  
We will show
\begin{align}
\int_0^T\int_{\mathbb{R}^2}\nabla \Phi:h^\epsilon\otimes u^\epsilon \,dx\,dt&\rightarrow \int_0^T\int_{\mathbb{R}^2}\nabla \Phi:h\otimes u^\epsilon \,dx\,dt,\label{mixconv1}\\
\int_0^T\int\nabla \Phi:h\otimes u^\epsilon \,dx\,dt&\rightarrow \int_0^T\int\nabla \Phi:h\otimes u \,dx\,dt.\label{mixconv2}
\end{align}
Consider first (\ref{mixconv1}):
\begin{align}
\int_0^T\int_{\mathbb{R}^2}&\nabla \Phi:h^\epsilon\otimes u^\epsilon \,dx\,dt- \int_0^T\int_{\mathbb{R}^2}\nabla \Phi:h\otimes u^\epsilon \,dx\,dt\notag\\
&=\int_0^T\int_{\mathbb{R}^2}\int_{\mathbb{R}^2}\nabla \Phi(x):K(x-y)\delta^\epsilon(y)\otimes u^\epsilon(x) \,dy\,dx\,dt\notag\\
&\ \ \ \ \ \ \ \ \ -\int_0^T\int_{\mathbb{R}^2}\nabla \Phi(x):K(x-z(t))\otimes u^\epsilon(x) \,dx\,dt\notag\\
&=\int_0^T\int_{\mathbb{R}^2}\left(\int_{\mathbb{R}^2}\nabla \Phi(x):K(x-y)\otimes u^\epsilon(x)\,dx\right)\delta^\epsilon(y)\,dy\,dt \notag\\
&\ \ \ \ \ \ \ \ \ -\int_0^T \int_{\mathbb{R}^2}\nabla \Phi(x):K(x-z(t))\otimes u^\epsilon(x)\,dx\,dt.\notag
\end{align}
To justify the above sequence we rely on the differentiability of $\Phi$ and its compact support combined with (\ref{htransportconserve}) and $u^\epsilon\in L^\infty(\mathbb{R}^2)$ which follows from (\ref{transportconservation}) and Lemma \ref{inftylemma}.  Indeed,
\begin{align}
\|u^\epsilon(t)\|_{L^\infty(\mathbb{R}^2)}\leq C\|\omega^\epsilon(t)\|_{L^1\cap L^\infty(\mathbb{R}^2)}\leq C\|\omega(0)\|_{L^1\cap L^\infty(\mathbb{R}^2)}.\notag
\end{align}
Combined with the compact support of $\Phi$ we see that for any $p\geq 1$,  $\partial_i\Phi_ju_j^\epsilon\in L^\infty([0,T];L^p(\mathbb{R}^2))$ uniformly.  Hence Lemmas \ref{equlemma} and \ref{inftylemma} show
\begin{align}
f^\epsilon(y,t)=\int_{\mathbb{R}^2}\nabla \Phi(x,t):K(x-y)\otimes u^\epsilon(x,t)\,dx\notag
\end{align}
is uniformly equicontinuous, uniformly on $[0,T]$ and satisfies (\ref{eqlass4}).  Then, Lemma \ref{bigguns} then implies (\ref{mixconv1}).  

To obtain (\ref{mixconv2}) recall that $\Phi$ has compact support separate from $z(t)$ so $h$ is a bounded function in the support of $\Phi$.  The strong convergence (\ref{l1convergence}) now implies (\ref{mixconv2}).

{\bf Claim:}
\begin{align}
\int_0^T\int_{\mathbb{R}^2}\nabla \Phi:h^\epsilon\otimes h^\epsilon \,dx\,dt\rightarrow 0.\label{claim5}
\end{align}
To prove this we recall
\begin{align}
\int_0^T\int_{\mathbb{R}^2}\nabla \Phi:h\otimes h \,dx\,dt=0\label{hscalar}
\end{align}
then add and subtract a cross term so the claim is reduced to proving
\begin{align}
\int_0^T\int_{\mathbb{R}^2}\nabla \Phi:h^\epsilon\otimes h^\epsilon \,dx\,dt&\rightarrow \int_0^T\int_{\mathbb{R}^2}\nabla \Phi:h\otimes h^\epsilon \,dx\,dt,\label{hconv1}\\
\int_0^T\int\nabla \Phi:h\otimes h^\epsilon \,dx\,dt&\rightarrow \int_0^T\int\nabla \Phi:h\otimes h \,dx\,dt.\label{hconv2}
\end{align}
To check (\ref{hscalar}) one can compute directly, when $x\neq z(t)$,
\begin{align}
h(x,t)\cdot\nabla h(x,t)=K(x-z(t))\cdot\nabla K(x-z(t)) = \nabla \frac{1}{4\pi}\frac{1}{|x-z(t)|^2}.\notag
\end{align}
That is, in the support of $\Phi$, $h(x,t)\cdot\nabla h(x,t)$ is the gradient of a scalar.

The convergence (\ref{hconv2}) is argued similar to (\ref{mixconv1}) or (\ref{claim3}) once one notes $h$ is smooth and bounded in the support of $\Phi$.  The convergence (\ref{hconv1}) is more subtle and we use an argument inspired by the proof of Theorem 3.2 in \cite{MR727203}.  We proceed as before and rewrite the quantity:
\begin{align}
\int_0^T\int_{\mathbb{R}^2}&\nabla \Phi:h^\epsilon\otimes h^\epsilon \,dx\,dt -\int_0^T\int_{\mathbb{R}^2}\nabla \Phi:h\otimes h^\epsilon \,dx\,dt\notag\\
&=\int_0^T\int_{\mathbb{R}^2}\left(\int_{\mathbb{R}^2}\nabla \Phi(x):K(x-y)\otimes h^\epsilon(x)\,dx\right)  \delta^\epsilon(y) \,dy\,dt\notag\\
&\ \ \ \ \ \ \ \ -\int_0^T\int_{\mathbb{R}^2}\nabla \Phi(x):K(x-z(t))\otimes h^\epsilon(x)\,dx  \,dt.\notag
\end{align}

To apply Lemmas \ref{equlemma} and \ref{bigguns}, and hence prove the desired convergence, we need to establish $\partial_i\Phi_jh_j^\epsilon$ is in $L^\infty([0,T];L^p(\mathbb{R}^2))$ uniformly for some $p\in (2,\infty)$.  Since $\nabla \Phi$ is smooth and has compact support we will need to focus our efforts on $h^\epsilon$.  For this we decompose $h^\epsilon$ into two parts, one representing the contribution of $\delta^\epsilon$ near $z(t)$ and the other representing the remaining part.  Let \begin{align}
r_0=\frac{1}{4}\inf\{|x-z(t)|:(x,t)\in Supp(\Phi)\}.\notag                                                                                                                                                                                                                                                                                                                                                                                                                                                                                                                                                                                                                                                                                                                                                                                                                                                                                                                                                                                                                 \end{align}
By assuming $\epsilon$ is sufficiently small and relying on (\ref{Mepsconv}) we may use $|M_\epsilon(t)-z(t)|<r_0$.  Now consider
\begin{align}
h^\epsilon(x,t)&=\int_{B_{r_0}(z(t))}K(x-y)\delta^\epsilon(y)\,dy+\int_{B^c_{r_0}(z(t))}K(x-y)\delta^\epsilon(y)\,dy\notag\\
&= h^{1,\epsilon}+h^{2,\epsilon}.\notag
\end{align}
For any $(x,t)\in Supp(\Phi)$ and $y\in B_{2r_0}(M_\epsilon(t))$ we have $|K(x-y)|<\frac{1}{2\pi r_0}$ so the first integral may be bounded by
\begin{align}
|h^{1,\epsilon}(x,t)| &\leq \int_{B_{2r_0}(M_\epsilon(t))}K(x-y)\delta^\epsilon(y)\,dy\notag\\
&\leq \frac{1}{2\pi r_0}\int_{B_{2r_0}(M_\epsilon(t))}\delta^\epsilon(y)\,dy\notag\\
&\leq \frac{1}{2\pi r_0}.\label{h1epsbound}
\end{align}
This bound is uniform in $t$ and $x$ and the support of $\Phi$ is compact so we conclude $\partial_i\Phi_jh_j^{1,\epsilon}\in L^\infty([0,T];L^p(\mathbb{R}^2))$ for all $p\in [1,\infty]$.

Now we handle $h^{2,\epsilon}$.  The first step is to bound the measure of the support of $\delta^\epsilon$ outside of $B_{r_0}(z(t))$ which we label $\Sigma(t)$:
\begin{align}
\Sigma(t) = \{x\in B_{r_0}^C(z(t))| \delta^\epsilon(x,t)\neq 0\}.\notag
\end{align}
Recall (\ref{diracshape}) and denote (as in Section \ref{momentsection}) by $\Lambda^\epsilon_t$ the image of $\Lambda$ transported by $u^\epsilon$.  Then,
\begin{align}
|\Sigma(t)| &\leq \epsilon^2\int_{B^c_{\frac{r_0}{2}}(M_\epsilon(t))}\delta^\epsilon(x,t)\,dx\notag\\
&\leq \frac{\epsilon^2}{9r^2_0}\int_{B^c_{\frac{r_0}{2}}(M_\epsilon(t))}(x-M_\epsilon(t))^2\delta^\epsilon(x,t)\,dx\notag\\
&\leq \frac{\epsilon^2}{9r^2_0}I_\epsilon(t).\label{sigmaestimate}
\end{align}
where $I_\epsilon$ is the second moment controlled by (\ref{uI2bound}).  The next piece is the well known estimate of the Biot-Savart law considered as a singular integral operator.  Combining the Calderon-Zygmund inequality with the Sobolev inequality (see \cite[p. 322]{MR1867882}) shows 
\begin{align}
\|h^{2,\epsilon}(t)\|_{L^p(\mathbb{R}^2)}\leq C\|\chi_{\Sigma(t)}\delta^\epsilon(t)\|_{L^q(\mathbb{R}^2)}\label{h2epsbound}
\end{align}
where $p$ and $q$ satisfy the relation $\frac{1}{p}=\frac{1}{q}-\frac{1}{2}$. Therefore if we are able to prove $\chi_{\Sigma(t)}\delta^\epsilon(t) \in L^\infty([0,T];L^q(\mathbb{R}^2))$ for some $q\in (1,2)$ we will have shown $\partial_i \Phi_j h_i^{2,\epsilon}(t)\in L^\infty([0,T];L^p(\mathbb{R}^2))$ for some $p\in(2,\infty)$.  To show this we first claim that if $I_\epsilon(0)$ satisfies assumption (\ref{diracstructre}) then $\|\chi_{\Sigma(t)}\delta^\epsilon(t)\|_{L^q(\mathbb{R}^2)}$ is bounded uniformly for $q\in (1,1+\frac{1}{2}\exp(-C_{\omega_0}T))$ where $C_{\omega_0}$ is the same as in (\ref{uI2bound}).  Indeed, for such a $q$,
\begin{align}
\|\chi_{\Sigma(t)}\delta^\epsilon(t)\|_{L^q(\mathbb{R}^2)}&\leq \frac{1}{\epsilon^2}|\Sigma(t)|^{\frac{1}{q}}\notag\\
&\leq \frac{\epsilon^{\frac{2}{q}-2}}{(9r_0)^\frac{1}{q}}I_\epsilon(t)^\frac{1}{q}\notag\\
&\leq C_{r_0}\epsilon^{\frac{2}{q}-2+2\exp(-C_{\omega_0}T)}.\notag
\end{align}
Moving from the first to the second line above we used (\ref{sigmaestimate}).  The move from the second to the third is exactly (\ref{uI2bound}).  For $q\in (1,1+\frac{1}{2}\exp(-C_{\omega_0}T))$ the exponent of $\epsilon$ is positive and we may conclude $\|\chi_{\Sigma(t)}\delta^\epsilon(t)\|_{L^q(\mathbb{R}^2)}$ is bounded uniformly in time (in fact it tends to zero as $\epsilon\rightarrow 0$).  Then (\ref{h2epsbound}) implies $h^{2,\epsilon}\in L^\infty([0,T];L^p(\mathbb{R}^2))$ uniformly from $p\in (2,\tilde{p})$ where $\tilde{p}=(2+\exp(-C_{\omega_0}T))/(1-\frac{1}{2}\exp(-C_{\omega_0}T))$.   

When combined with (\ref{h1epsbound}) we see $h^\epsilon\in L^\infty([0,T],L^{\tilde{p}}(\mathbb{R}^2))$  for some $\tilde{p}>2$ and with the compact support of $\Phi$ we have $\partial_i\Phi_jh_j^\epsilon\in L^\infty([0,T],L^p(\mathbb{R}^2))$ for all $p\in [1,\tilde{p}]$.  Now we can apply Lemmas \ref{equlemma}, \ref{inftylemma}, and \ref{bigguns} similar to the previous terms and obtain (\ref{hconv1}).  Therefore we have proven (\ref{claim5}).  Taking all of the claims together, (\ref{claim1}-\ref{claim5}), we conclude the proof of Theorem \ref{maintheorem}.

\bibliographystyle{plain}
\bibliography{vortex_wave_local}
\end{document}